\documentclass[12pt,reqno]{amsart}
\usepackage[a4paper,margin=1in]{geometry}

\usepackage{microtype}
\usepackage[cal=euler,scr=boondoxo]{mathalfa}
\usepackage{tikz,tikz-cd}
\usetikzlibrary{decorations}
\usepackage[colorlinks,allcolors=blue]{hyperref}
\usepackage{graphicx}
\usepackage{xcolor}
\usepackage[capitalize]{cleveref}

\hypersetup{
  pdftitle={Multiple zeta values in lambda-rings},
  pdfauthor={Sergey Mozgovoy}
}

\input{commands}
\defcite\sw{sweedler_hopf}
\defcite\mac{macdonald_symmetric}
\defcite\bur{burmester_balanced}
\defcite\bura{burmeister_algebraic}

\opr\T{\mathsf T} %Tensor algebra
\opr\Tc{\T^{\mathrm c}}
\dmat\Tp{\T^+}

\def\R{\mathsf R}
\def\Rp{\R^+} %f(0)=0
\opr\qz{\mathsf{qZ}} %Algebra of qMZV
\def\qzp{\qz^+}
\def\qZ{\qz}

\oper{wt}

\dmat\QS{\mathsf{QS}}
\dmat\QSp{\ub\QS} %positive QS, free CTD algebra
\forcsvlist\oper{QSh,CTD,CQS,CRB,CB,Coalg,sh}

\dmat\C{\mathsf C}
\dmat\A{\mathsf A}

\oper{Li}

\oper{qMZ}
\oper{MD}
\oper{qMZV}

\opr\MT{\mathsf{MT}} %mixed Tate motives

\oper{CAA}
\oper{CBA}
\oper{Mon}

\opr\MT{\mathsf{MT}}
\oper{DM}

\oper{GRT}
\oper{GT}

\oper{Coalg}
\oper{Bialg}
\oper{Hopf}
\oper{MZV}
\oper{RB}
\def\CC{\opn{Coalg}_\con} %connected coalgebras
\oper{corad}
\oper{sh}
\oper{qsh}

\def\nun{\mathrm{nu}} %nonunital
\def\con{\mathrm{con}}
\def\SZ{\mathrm{SZ}}
\def\BZ{\mathrm{BZ}}
\def\sf{\mathsf f}
\def\reg{\mathrm{reg}}

\def\mzf{multiple zeta function\xspace}
\def\mzv{multiple zeta values\xspace}

\def\bW{\ubar W}
\def\bU{\ubar U}
\def\C{\ubar C} %augmentation ideal
\def\bde{\bar\de}

\def\Lt{L_t}
\def\bLt{\ubar L_t}
\def\y{z_0} %generator of the Harmonic algebra
\def\P{\cP}

\def\eH{\mathfrak H} %extended Harmonic algebra
\def\Zf{\cZ^\sf_*} %formal algebra of qMZV
\def\qz#1{\cZ_{q,#1}} %qmzv
\def\qzp#1{\cZ^+_{q,#1}} %qmzv^+
\def\qZ{\cZ}
\def\Zr{\cZ_\reg}
\def\Zrp{\cZ^+_\reg}
\def\Ap{A^+}
\def\QQ{\bQ\pser q}

\begin{document}
\title{Multiple zeta values in \tpdf{$\lambda$}{lambda}-rings}
\author{Sergey Mozgovoy}
\address{School of Mathematics, Trinity College Dublin, Dublin 2, Ireland
\newline\indent
Hamilton Mathematics Institute, Dublin 2, Ireland}
\email{mozgovoy@maths.tcd.ie}

\begin{abstract}
We propose a framework for multiple zeta values in $\lambda$-rings
that unifies both classical multiple zeta values and multiple $q$-zeta values. As in the classical case, we show that the multiple zeta function defines an algebra homomorphism from the quasi-shuffle algebra to the $\lambda$-ring. We also prove a formula for multiple polylogarithms in $\lambda$-rings, similar to the iterated integral expression in the classical case. As an application, we explicitly describe the algebra of regular multiple $q$-zeta values, compute the corresponding Poincar\'e series, and exhibit a small spanning set.
\end{abstract}

\maketitle
\section*{Introduction}
Multiple zeta values
\begin{equation}\label{ze1}
\ze(s_1,\dots,s_k)=\sum_{n_1>\dots>n_k>0}\prod_{i=1}^k\frac1{n_i^{s_i}},\qquad s\in\bZ_{\ge1}^k,\, s_1\ge2,
\end{equation}
were introduced by Euler and Goldbach \cite{euler_meditationes}
for $k=2$,
and have since appeared
in numerous areas of mathematics and theoretical physics
\cite[\S1.1]{goncharov_multiple}.
The $\bQ$-vector space $\cZ\sbs\bR$ spanned by \mzv
is a commutative algebra equipped with a weight filtration,
where $\ze(s_1,\dots,s_k)$ has weight $\sum_i s_i$.
It was conjectured in \cite{zagier_values}
that the Poincar\'e series of $\cZ$ is $\frac1{1-t^2-t^3}$.
This conjecture was strengthened in \cite{hoffman_algebra},
by proposing that the MZV $\ze(s_1,\dots,s_k)$ with $s_i\in\set{2,3}$ form a basis of $\cZ$.
It was proved in \cite{brown_mixed} that $\cZ$ is spanned by these elements.

The algebra structure of $\cZ$ was also investigated.
Let $U=\bop_{i\ge1}\bQ z_i$ be the non-unital algebra with $z_i\circ z_j=z_{i+j}$.
Then there is a surjective algebra morphism
\cite{hoffman_algebra,racinet_series,ihara_derivation}
\eq{\ze^*:\QS(U)=\bop_{k\ge0}U^{\ts k}\to\cZ,}
where $\QS(U)$ is the quasi-shuffle (Hopf) algebra of $U$
\cite{newman_cofree,hoffman_quasia,fares_quelques},
$\ze^*(z_1)=0$ and
\begin{equation}\label{ze-homom}
\ze^*(z_{s_1}\dots z_{s_k})=\ze(s_1,\dots,s_k),\qquad s_1\ge2.
\end{equation}
\oper{QSym}
The Hopf algebra $\QS(U)$ is isomorphic to the Hopf algebra $\QSym$ of quasi-symmetric functions by \cite[Theorem 3.4]{hoffman_algebra}.
The kernel of $\ze^*$ was conjecturally described in
\cite{racinet_series,ihara_derivation}.
\medskip

Similarly, given $f_1,\dots,f_k\in \Ap=q\QQ$,
consider the multiple $q$-zeta value
\eq{Z(f_1\ts\dots\ts f_k)
=\sum_{n_1>\dots>n_k>0}\prod_{i=1}^k f_i(q^{n_i})\in\QQ.
}
This extends to an algebra morphism
\[Z:\QS(\Ap)\to\QQ,\]
where $\QS(\Ap)$ is the quasi-shuffle algebra of the non-unital algebra $\Ap$.
In the literature on multiple $q$-zeta values
it is common to fix a sequence $\bff=(f_i)_{i\ge1}$ in~$\Ap$ and define
\[Z^\bff(s_1,\dots,s_k)=Z(f_{s_1}\ts\dots\ts f_{s_k}),\qquad
s\in\bZ_{\ge1}^k.
\]
If $g_i=(1-q)^i f_i\in\bQ[q]$ and $g_i(1)=1$ for all $i\ge1$, then
\eq{(1-q)^{\sum_i s_i}Z^\bff(s_1,\dots,s_k)\big|_{q\to1}
=\ze(s_1,\dots,s_k),}
for $s_1\ge2$.
Moreover, if $\deg g_i\le i$ for all $i\ge1$, then $\bff$ is a basis of $\Rp_0=\R_0\cap \Ap$ (see \S\ref{sec:qMZV}), where
\[
\R_0=\bQ\sbig{\tfrac q{1-q}}
=\sets{f\in\bQ\sbig{q,\tfrac1{q-1}}}{\deg f\le0}=\cO(\bP_\bQ^1\ms\set1).\]
Therefore the image
\[\qzp0=\Im(Z:\QS(\Rp_0)\to\QQ)\]
is spanned by the elements $Z^\bff(s_1,\dots,s_k)$ for $s\in\bZ_{\ge1}^k$.
Let $f_0=1$.
The algebra $\qz0$ spanned by the elements $Z^\bff(s_1,\dots,s_k)$
for $s\in\bN^k$ with $s_1\ge1$ is equal to $\qzp0$ by \cite{hirose_unified}.
It is conjectured in \cite{bachmann_dimension}
(see also \cite{okounkov_hilbert})
that $\qzp0$ is spanned by $Z^\bff(s_1,\dots,s_k)$ with $s_i\in\set{1,2,3}$ (although they don't form a basis).
There are also conjectures for the Poincar\'e series of $\qzp0$ and for the kernel of the map $Z:\QS(\Rp_0)\to\qzp0$
(see \S\ref{sec:qMZV} and \S\ref{sec:duality}).
% (see Conjectures \ref{conj1}-\ref{conj2}).

The above formulas for \mzv suggest that a similar formalism can be developed for arbitrary \la-rings (see \eg \cite{atiyah_group}). % (see Appendix \ref{lambda}).
Given a \la-ring $A$ with Adams operations $\psi^n$,
we define the truncated multiple zeta function for $n\ge1$ as
\[Z_n:\QS(A)\to A,\qquad
a_1\ts\dots\ts a_k\mto \sum_{n>n_1>\dots>n_k>0}\prod_{i=1}^k \psi^{n_i}(a_i),
\]
and we show that $Z_n$ is an algebra morphism.
This result extends to untruncated multiple zeta functions when the $\la$-ring $A$ is complete.
We will see that both classical \mzv and multiple $q$-zeta values can be interpreted within this framework.
In the case of multiple $q$-zeta values, one considers the \la-ring $\QQ$ with Adams operations $\psi^n(f)=f(q^n)$.

For the computations of \mzv, we introduce the multiple polylogarithm
\begin{equation*}
\Lt:\QS(A)\to A\pser t,\qquad
\Lt(a_1\ts\dots\ts a_k)
=\sum_{n_1>\dots>n_k>0}t^{n_1}\prod_{i=1}^k \psi^{n_i}(a_i),
\end{equation*}
which is an analogue of the classical multiple polylogarithm
\[\Li_{s_1,\dots,s_k}(t)=\sum_{n_1>\dots>n_k>0}t^{n_1}\prod_{i=1}^k\frac1{n_i^{s_i}}.\]
The following result is an analogue of the iterated integral formula for multiple zeta values \cite{zagier_values,hoffman_algebra}
and a formula for multiple $q$-zeta values \cite{singer_q}
(see Remark \ref{iter}).

\begin{customthm}{Theorem 1}
Consider continuous $A$-linear operators
\[\P_a:tA\pser t\to tA\pser t,\qquad
t^n\mto \psi^n(a)t^n,\qquad
\cT:A\pser t\to tA\pser t,\qquad f\mto\frac t{1-t}f,\]
for $a\in A$.
Then
\[\Lt(a_1\ts\dots\ts a_k)=\P_{a_1}\cT\dots \P_{a_k}\cT(1).\]
\end{customthm}

As an application, we will study the algebra of multiple zeta values
\[\Zrp=\Im\rbr{Z:\QS(\bQ[q]^+)\to \QQ}\]
associated with the algebra $\bQ[q]=\cO(\bP^1_\bQ\ms\set\infty)$
in place of the algebra $\R_0=\cO(\bP^1_\bQ\ms\set1)$ considered earlier.
It is spanned by the elements
\[Z^\bff(s_1,\dots,s_k)=Z(q^{s_1}\ts\dots\ts q^{s_k}),\qquad
s\in\bZ_{\ge1}^k.\]
A closely related algebra $\Zr$
is spanned by the elements
$Z^\bff(s_1,\dots,s_k)$ for $s\in\bN^k$ with $s_1\ge1$.
Both algebras carry weight filtrations
with
$\wt Z^\bff(s_1,\dots,s_k)\le \sum_i\max\set{s_i,1}$.

We will show that the algebras $\Zrp$ and $\Zr$ coincide,
but their weight filtrations are different.
This is in contrast with the algebras $\qzp0$ and $\qz0$,
for which the equality of the weight filtrations can be deduced
from the results of \cite{hirose_unified}.
More precisely, we prove the following.

\begin{customthm}{Theorem 2}
We have
\begin{enumerate}
\item
The algebras $\Zrp$ and $\Zr$ coincide
and are generated by the elements $\frac{q^k}{1-q^k}$ for $k\ge1$.
\item
The Poincar\'e series of $\Zrp$ is
\[
P(\Zrp,t)
=\sum_{i\ge0}\dim\gr^W_i(\Zrp)t^i
=1+\sum_{d\ge1}dt^d=\frac{1-t+t^2}{(1-t)^2}.\]

\item
The Poincar\'e series of $\Zr$ is
\[
P(\Zr,t)
=\sum_{i\ge0}\dim\gr_i^W(\Zr)t^i
=1+\frac{1}{1-t}\sum_{d\ge1}\phi(d)t^d,
\]
where $\phi$ is Euler's totient function.

\item
The algebra
$\Zrp$ is spanned by the elements $Z^\bff(s_1,\dots,s_k)$
with $s_i\in\set{1,2}$.
\end{enumerate}
\end{customthm}

\subsection*{Acknowledgments}
The author would like to thank Francis Brown, Vladimir Dotsenko,
Adam Keilthy and Florian Naef for many useful discussions.
He is also grateful to Henrik Bachmann and Annika Burmester for their helpful comments on the manuscript.
\section{Quasi-shuffle algebras} \label{sec:QSA}
%\subsection{Quasi-shuffle algebras}\label{qs-algebras}
Quasi-shuffle (Hopf) algebras were introduced in \cite{newman_cofree}
under the name \idef{cofree irreducible Hopf algebras}.
They were independently discovered in \cite{hoffman_quasia,fares_quelques}.
Let $K$ be a field and let $\Hopf_{\con}$ be the category of connected Hopf algebras over $K$
(also called conilpotent or irreducible Hopf algebras).
Let $\Ass_\nun$ be the category of associative non-unital algebras over $K$.
It was proved in \cite{newman_cofree} that
the forgetful functor
\[F:\Hopf_{\con}\to\Ass_\nun,\qquad (H,\mu,\eta,\de,\eps)\mto \bar H=\Ker(\eps),\]
has a right adjoint
\[\QS:\Ass_\nun\to\Hopf_{\con},\qquad
(A,\circ)\mto \QS(A,\circ)=\bop_{n\ge0}A^{\ts n}.\]
The Hopf algebra $\QS(A)=\QS(A,\circ)$
is called the \idef{quasi-shuffle algebra} of $A$.
It is equipped with an associative product $*$,
called the \idef{quasi-shuffle product},
defined inductively by $1_K*u=u*1_K=u$ and \cite{hoffman_quasia}
\begin{equation}\label{qs-prod1}
(au)*(bv)=a(u*bv)+b(au*v)+(a\circ b)(u*v),\qquad
a,b\in A,\, u,v\in \QS(A),
\end{equation}
where we write $uv$ instead of $u\ts  v$.
It is also equipped with the coproduct and counit of the cofree coalgebra
%(we abbreviate $u\ts v$ by $uv$),
\begin{equation}\label{coprod1}
\de(a_1\dots a_n)=\sum_{i=0}^n a_1\dots a_i\ts a_{i+1}\dots a_n,
\qquad
\eps:\QS(A)\to A^{\ts0}=K.
\end{equation}
With these structures, $\QS(A)$ becomes a conilpotent bialgebra.
It is automatically a Hopf algebra by \sw[9.2.2].
Below we will discuss the properties of $\QS(A)$ in more detail.

Define the \idef{shuffle algebra} $\Sh(A)=\QS(A,0)$,
corresponding to the zero product on $A$.
Its product is denoted by $\sha$ and is called the \idef{shuffle product}.
If $A$ is a commutative algebra over a field of characteristic zero,
then $\QS(A,\circ)$ is canonically isomorphic to $\Sh(A)$
by \cite[Theorem 1.12]{newman_cofree}
and \cite[Theorem 3.3]{hoffman_quasia}.

\subsection{Combinatorial description of the quasi-shuffle product}
The inductive formula \eqref{qs-prod1} for the quasi-shuffle product
admits several equivalent combinatorial descriptions,
in terms of quasi-shuffles (\cf \cite{cartier_structure}),
mixable shuffles (\cf \cite{newman_cofree,guo_baxter}),
and lattice paths \cite{fares_quelques,aguiar_monoidal}.

For a surjective map $\si:[m]\to[n]=\set{1,\dots,n}$, let
\begin{equation}
\si_*:A^{\ts m}\to A^{\ts n},\qquad
a_1\ts\dots\ts a_m\mto c_1\ts\dots\ts c_n,\qquad c_j=\prod_{i\in\si\inv(j)}a_i,
\end{equation}
where the product is taken in increasing order of $i$.
Define the set of \idef{quasi-shuffles}
$\qsh(m,n)=\bigsqcup_{r\ge0}\qsh(m,n;r)$,
where $\qsh(m,n;r)$ consists of surjective maps
$\si:[m+n]\to[r]$ satisfying
\begin{equation}\label{qs1}
\si(1)<\dots<\si(m),\qquad \si(m+1)<\dots<\si(m+n).
\end{equation}
In particular, the set $\sh(m,n)=\qsh(m,n;m+n)$ consists of permutations and is called the set of \idef{shuffles}.

We can identify $\si\in\qsh(m,n;r)$ with the pair
$(I,J)=(\si([m]),\si(m+[n]))$
of subsets of $[r]$ satisfying $[r]=I\cup J$.
For $u=a_1\ts\dots\ts a_m$ and $v=b_1\ts\dots\ts b_n$,
we have
\begin{equation}
\si_*(u\ts v)=c_1\ts\dots \ts c_r,\qquad\qquad
c_k=\begin{cases}
a_i & k=\si(i)\notin\si(m+[n]),\\
b_j & k=\si(m+j)\notin \si([m]),\\
a_i\circ b_j & k=\si(i)=\si(m+j).
\end{cases}
\end{equation}
The inductive formula \eqref{qs-prod1} is equivalent to
\begin{equation}\label{qs-prod3}
u*v=\sum_{\si\in\qsh(m,n)}\si_*(u\ts v).
\end{equation}

\begin{remark}
There is an alternative way to parametrize quasi-shuffles, which was used in \cite{newman_cofree} to express the quasi-shuffle product.
Given a quasi-shuffle $\si\in\qsh(m,n;r)$,
let $T$ be the set of pairs $(i,j)\in[m]\xx(m+[n])$ such that $\si(i)=\si(j)$.
Define the total order on $[m+n]$, where $i\prec j$ if either $\si(i)<\si(j)$ or $(i,j)\in T$.
This total order corresponds to a permutation $\ta\in\sh(m,n)\sbs S_{m+n}$
such that $\ta (i)<\ta (j)$ for $i\prec j$.
Moreover, $\ta(i)+1=\ta(j)$ for $(i,j)\in T$.
The pair $(\ta,T)$ is called a mixable shuffle (\cf \cite{guo_baxter}).
It determines the quasi-shuffle~$\si$ uniquely.
Indeed, consider the equivalence relation on $[m+n]$ generated by $\ta(i)\sim\ta(i)+1$ for $(i,j)\in T$.
Then the quotient $[m+n]\qt\sim$ can be identified with $[r]$ and the
composition $[m+n]\xto\ta[m+n]\to[r]$ coincides with $\si$.
\end{remark}

Another equivalent formula for the quasi-shuffle product uses lattice paths.
Let $S=\set{(1,0),(0,1),(1,1)}\sbs\bN^2$ and let $S^*=\bigsqcup_{n\ge0}S^n$ be the free monoid generated by~$S$.
Consider the map
\[\pi:S^*\to\bN^2,\qquad
s_1\dots s_n\mto\sum*_i s_i,\]
and define the set of \idef{Delannoy paths} from $(0,0)$ to $(m,n)$
to be $\cD(m,n)=\pi\inv(m,n)$.
For $\ell\in\cD(m,n)$, let $\ga_\ell:A^{\ts m}\xx A^{\ts n}\to \QS(A)$ be defined as follows.
If $m=0$ or $n=0$, let $\ga_\ell(u,v)=uv$.
If $\ell=s\ell'\in \cD(m,n)$ with $s\in S$
and $u=a\ts u'\in A^{\ts m}$, $v=b\ts v'\in A^{\ts n}$ with $a,b\in A$, let
\begin{equation}
\ga_\ell(u,v)
=\begin{cases}
a\ts\ga_{\ell'}(u',v)&s=(1,0),\\
b\ts\ga_{\ell'}(u,v')&s=(0,1),\\
(a\circ b)\ts\ga_{\ell'}(u',v')&s=(1,1).
\end{cases}
\end{equation}

\begin{lemma}
The quasi-shuffle product
of $u\in A^{\ts m}$ and $v\in A^{\ts n}$ is given by
\begin{equation}\label{qs-prod2}
u*v=\sum_{\ell\in\cD(m,n)}\ga_\ell(u,v).
\end{equation}
\end{lemma}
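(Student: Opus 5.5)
The plan is to show that the right-hand side of \eqref{qs-prod2} satisfies the same inductive characterization \eqref{qs-prod1} as the quasi-shuffle product, and then conclude by induction on $m+n$. Define
\[F(u,v)=\sum_{\ell\in\cD(m,n)}\ga_\ell(u,v),\qquad u\in A^{\ts m},\ v\in A^{\ts n},\]
and extend $F$ bilinearly to $\QS(A)\ts\QS(A)$. Since $\ga_\ell$ is multilinear in the tensor factors, $F$ is well defined on tensors. It then suffices to check two things: $F$ agrees with $*$ when one argument lies in $A^{\ts0}=K$, and $F$ obeys the recursion \eqref{qs-prod1}.

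\emph{Base case.} If $m=0$, then $\cD(0,n)$ consists of the single path $(0,1)^n$, and by definition $\ga_\ell(u,v)=uv=v$. This agrees with $1_K*v=v$. The case $n=0$ is symmetric.

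\emph{Inductive step.} Let $m,n\ge1$, and write $u=au'$ and $v=bv'$ with $a,b\in A$. Every path $\ell\in\cD(m,n)$ factors uniquely as $s\ell'$ with $s\in S$. Here $\ell'$ ranges over $\cD(m-1,n)$, $\cD(m,n-1)$ or $\cD(m-1,n-1)$ according as $s=(1,0)$, $(0,1)$ or $(1,1)$. Splitting the sum according to the first step and using the defining recursion for $\ga_\ell$ gives
\[F(au',bv')=a\,F(u',bv')+b\,F(au',v')+(a\circ b)\,F(u',v').\]
The left multiplications here are concatenation in $\QS(A)$. By the induction hypothesis on $m+n$, each $F$ on the right equals the corresponding quasi-shuffle product. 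The displayed identity is therefore exactly \eqref{qs-prod1}, so $F(u,v)=u*v$.

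The one point needing care is the edge cases where $u'$ or $v'$ is empty. Then $\ga_{\ell'}$ is given by the concatenation rule rather than the recursion, for example $\ga_{\ell'}(1,bv')=bv'$. The base case covers exactly this, so the induction closes. I expect no real obstacle beyond this bookkeeping, which amounts to checking the bijection
\[\cD(m,n)\cong\cD(m-1,n)\sqcup\cD(m,n-1)\sqcup\cD(m-1,n-1),\]
given by removing the first step.

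Alternatively, one could deduce the lemma from \eqref{qs-prod3}, using a bijection $\qsh(m,n)\to\cD(m,n)$. It sends $\si$, viewed as the pair $(I,J)$, to the path whose $k$-th step is $(1,0)$, $(0,1)$ or $(1,1)$ according as $k\in I\setminus J$, $J\setminus I$ or $I\cap J$. Under this bijection $\ga_\ell(u,v)=\si_*(u\ts v)$. The direct induction above is simpler, though, since it avoids identifying this bijection explicitly.
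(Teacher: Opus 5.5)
Your proof is correct, but your main argument takes a different route from the paper's; the ``alternative'' you sketch at the end is exactly the paper's proof.

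The paper does not go through the recursion \eqref{qs-prod1}. It starts from \eqref{qs-prod3} and sends a length-$r$ path $\ell=s_1\dots s_r\in\cD(m,n)$ to the pair $(I,J)$, where $I=\{i: s_i\in\{(1,0),(1,1)\}\}$ and $J=\{i: s_i\in\{(0,1),(1,1)\}\}$. It observes that this is a bijection onto $\qsh(m,n;r)$ with $\gamma_\ell(u,v)=\sigma_*(u\otimes v)$.

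You instead show directly that the path sum satisfies the defining recursion \eqref{qs-prod1} together with the unit condition. You split $\cD(m,n)$ by first step and induct on $m+n$. Your treatment of the empty cases via the $m=0$ or $n=0$ clause of $\gamma_\ell$ is right.

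Your route uses only the actual definition of $*$, so it is more self-contained. The paper's argument presupposes that \eqref{qs-prod3} already computes $*$, and the paper asserts that equivalence with \eqref{qs-prod1} without proof. Your induction, combined with the bijection you describe, supplies that missing verification as a byproduct. This matters because \eqref{qs-prod3} is the form used later, in the remark following Newman's theorem.

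The paper's bijective argument, for its part, gives a term-by-term identification of the two combinatorial descriptions, refined by the length $r$. Your recursive argument does not make that correspondence visible on its own.
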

\begin{proof}
To see that \eqref{qs-prod3} is equivalent to \eqref{qs-prod2},
we note that there is a bijection between the set of length~$r$ paths in $\cD(m,n)$ and the set of quasi-shuffles $\qsh(m,n;r)$.
For a path $\ell=s_1\dots s_r\in\cD(m,n)$, we consider $\si\in\qsh(m,n;r)$ corresponding to the pair $(I,J)$, where
\[I=\sets{i\in[r]}{s_i=(1,0)\text{ or }(1,1)},\qquad
J=\sets{i\in[r]}{s_i=(0,1)\text{ or }(1,1)}.\]
We have $(m,n)=\sum_i s_i=(\n I,\n J)$ and $\si_*(u\ts v)=\ga_\ell(u,v)$.
\end{proof}

\subsection{Conilpotent coalgebras}
A coalgebra $(C,\de,\eps)$ over a field $K$ is called \idef{coaugmented} if it is equipped with a morphism of coalgebras $\eta:K\to C$. %, called a coaugmentation.
Equivalently, $C$ is equipped with a group-like element $1_C=\eta(1_K)\in C$, meaning that $\de(1_C)=1_C\ts 1_C$ and $1_C\ne0$.
We have $\eps\eta=\id$ and $C=K1_C\oplus \bar C$, where $\bar C=\Ker\eps$.
The reduced coproduct
\[\bar\de:\bar C\xto\de C\ts C\to\bar C\ts\bar C,\qquad
\bar\de(x)=\de(x)-x\ts1_C-1_C\ts x,\]
is coassociative (note that it depends on the coaugmentation).
We define $\de_n:C\to C^{\ts n}$
with $\de_1=\id$ and $\de_{n}=(\id\ts\de_{n-1})\de$ for $n\ge2$.
%(we can also define $\de_0=\eps$)
Similarly, define $\bar\de_n:\bar C\to\bar C^{\ts n}$ for $n\ge1$.

\begin{lemma}
We have
\begin{equation}\label{de-bde}
\de_n=\sum_{\ov{\si:[m]\emb[n]}{m\ge1}}\si_*\bar\de_m,\qquad
\bar\de_n=\sum_{\ov{\si:[m]\emb[n]}{m\ge1}}(-1)^{n-m} \si_*\de_m
\end{equation}
on $\bar C$, where the sums run over strictly increasing maps $\si:[m]\emb[n]$ and
\[\si_*:C^{\ts m}\to C^{\ts n},\qquad
a_1\ts\dots\ts a_m\mto b_1\ts\dots b_n,\qquad
b_j=\begin{cases}
a_i&\si(i)=j,\\
1&j\notin \si([m]).
\end{cases}\]
\end{lemma}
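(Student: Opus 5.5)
We prove the first identity by induction on $n$, and then obtain the second from the first by Möbius inversion on the poset of increasing maps $[m]\emb[n]$, i.e.\ on the Boolean lattice of subsets of $[n]$.

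\emph{First identity.} For $n=1$ both sides equal $\id$, since the only increasing map is $\id:[1]\to[1]$ and $\bar\de_1=\id$. For the inductive step, write $\de(x)=x\ts1_C+1_C\ts x+\bar\de(x)$ for $x\in\bar C$. Then
\[\de_n(x)=(\id\ts\de_{n-1})\de(x)=x\ts\de_{n-1}(1_C)+1_C\ts\de_{n-1}(x)+(\id\ts\de_{n-1})\bar\de(x),\]
with $\de_{n-1}(1_C)=1_C^{\ts(n-1)}$ because $1_C$ is group-like.
\begin{itemize}
\item The term $x\ts1_C^{\ts(n-1)}$ is $\si_*\bar\de_1(x)$ for $\si:[1]\to[n]$, $1\mto1$.
\item By induction, $1_C\ts\de_{n-1}(x)$ is the sum of $\si_*\bar\de_m$ over increasing maps $\si$ whose image avoids $1$.
\item In the last term, write $\bar\de(x)=\sum x'\ts x''$ with $x',x''\in\bar C$ and apply the inductive hypothesis to $x''$. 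This gives the sum of $\tau_*\big((\id\ts\bar\de_{m-1})\bar\de\big)=\tau_*\bar\de_m$ over increasing $\tau:[m]\to[n]$ with $\tau(1)=1$ and $m\ge2$. Here we use coassociativity of $\bar\de$, so that $\bar\de_m=(\id\ts\bar\de_{m-1})\bar\de$.
\end{itemize}
These three families partition all increasing maps $[m]\emb[n]$ with $m\ge1$: either $1\notin\Im\si$, or $\si(1)=1$ with $m=1$, or $\si(1)=1$ with $m\ge2$. This proves the first identity.

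\emph{Second identity.} An increasing map $\si:[m]\emb[n]$ is determined by its image $S\subseteq[n]$. Write $\si_S$ for this map. For $S=\set{s_1<\dots<s_m}$, let $\si_S^{-1}$ denote the unique increasing bijection $[m]\to S$, so that we may regard maps on index sets. The key fact is the composition rule: for $T\subseteq S$, we have $(\si_S)_*(\rho)_*=(\si_T)_*$, where $\rho:[|T|]\emb[|S|]$ is the increasing map with image $\si_S^{-1}(T)$. This holds because inserting units in two stages is the same as inserting them once.

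Applying the first identity to each $\de_m$, the right side of the second identity becomes
\[\sum_{S}(-1)^{n-|S|}(\si_S)_*\de_{|S|}=\sum_{S}(-1)^{n-|S|}\sum_{\emptyset\ne T\subseteq S}(\si_T)_*\bar\de_{|T|}.\]
Exchanging the order of summation, the coefficient of $(\si_T)_*\bar\de_{|T|}$ is $\sum_{T\subseteq S\subseteq[n]}(-1)^{n-|S|}$. This equals $1$ if $T=[n]$ and $0$ otherwise. Only $T=[n]$ survives, and $(\si_{[n]})_*=\id$, so the right side equals $\bar\de_n$.

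The main obstacle is bookkeeping. One must state the composition rule for the $\si_*$ precisely, and one must confirm that $\bar\de_m$ takes values in $\bar C^{\ts m}\subseteq C^{\ts m}$, so that applying $\si_*$ is legitimate. The latter holds because coassociativity of $\bar\de$ makes $\bar\de_m$ well defined as a map $\bar C\to\bar C^{\ts m}$. Everything else is the inclusion–exclusion identity above.
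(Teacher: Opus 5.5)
Your proof is correct. The paper gives no proof of this lemma ("left to the reader"), so there is nothing to compare your route against.

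The induction for the first identity is sound. You split $\delta(x)=x\otimes 1_C+1_C\otimes x+\bar\delta(x)$ and partition the increasing maps $\sigma:[m]\hookrightarrow[n]$ into three cases: $1\notin\mathrm{Im}\,\sigma$; $\sigma(1)=1$ with $m=1$; and $\sigma(1)=1$ with $m\ge2$. The inversion for the second identity is also sound. It combines the composition rule for the insertion maps $\sigma_*$ with the fact that $\sum_{T\subseteq S\subseteq[n]}(-1)^{n-|S|}$ equals $1$ if $T=[n]$ and $0$ otherwise.

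Three small attributions are off, though none affects validity:
\begin{enumerate}
\item The recursion $\bar\delta_m=(\mathrm{id}\otimes\bar\delta_{m-1})\bar\delta$ is the paper's definition of $\bar\delta_m$, not a consequence of coassociativity.
\item The inclusion $\bar\delta(\bar C)\subseteq\bar C\otimes\bar C$, which makes $\bar\delta_m$ land in $\bar C^{\otimes m}$, comes from the counit axiom together with $\varepsilon(1_C)=1$. Your argument in fact never uses coassociativity.
\item $\sigma_S^{-1}$ should denote the inverse bijection $S\to[|S|]$, which is how you actually use it.
\end{enumerate}

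For the second identity there is also a slightly more direct route. Put $\pi=\mathrm{id}-1_C\varepsilon$. Check by the same induction that $\bar\delta_n=\pi^{\otimes n}\delta_n$ on $\bar C$, then expand the tensor power using the counit axiom. This yields the alternating sum immediately, with the first identity then recoverable by the same inversion. Your route is equally valid.
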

\begin{proof}
The proof is left to the reader.
\end{proof}

The following result highlights the relevance of quasi-shuffles in the theory of bialgebras.

\begin{lemma}
\label{qs coprod}
If $B$ is a bialgebra and $a,b\in\bar B$, then
\[\bar\de_m(ab)
=\sum_{i,j\ge1}\sum_{\si\in\qsh(i,j;m)}\si_*(\bar\de_i(a)\ts\bar\de_j(b)).\]
\end{lemma}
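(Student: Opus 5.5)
The plan is to derive the formula for $\bar\de_m(ab)$ from the multiplicativity of the full iterated coproduct, $\de_m(ab)=\de_m(a)\de_m(b)$ in $B^{\ts m}$, and then convert between $\de$ and $\bar\de$ using \eqref{de-bde}. The identity $\de_m(ab)=\de_m(a)\de_m(b)$ follows by induction on $m$ from $\de$ being an algebra morphism and $\de_m=(\id\ts\de_{m-1})\de$.

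\textbf{Step 1: expand $\de_m(ab)$.} By the first formula in \eqref{de-bde},
\[\de_m(a)=\sum_{\al:[i]\emb[m]}\al_*\bar\de_i(a),\qquad \de_m(b)=\sum_{\be:[j]\emb[m]}\be_*\bar\de_j(b).\]
Multiplying componentwise in $B^{\ts m}$, the product $\al_*(x)\cdot\be_*(y)$ depends only on the pair of images $(I,J)=(\al([i]),\be([j]))$. In slot $k$ it equals:
\begin{itemize}
\item $x_{\al^{-1}(k)}$ if $k\in I\setminus J$;
\item $y_{\be^{-1}(k)}$ if $k\in J\setminus I$;
\item $x\,y$ in the appropriate entries if $k\in I\cap J$;
\item $1$ if $k\notin I\cup J$.
\end{itemize}
Let $L=I\cup J$ with $|L|=r$, and let $\tau:[r]\emb[m]$ be the increasing map onto $L$. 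Then $(I,J)$ corresponds exactly to a quasi-shuffle $\si\in\qsh(i,j;r)$, and the product equals $\tau_*\si_*(x\ts y)$.

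\textbf{Step 2: identify the result.} Regrouping the sum of Step 1 by $\tau$ gives
\[\de_m(ab)=\sum_{\tau:[r]\emb[m]}\tau_*\Phi_r,\qquad \Phi_r=\sum_{i,j\ge1}\sum_{\si\in\qsh(i,j;r)}\si_*\bigl(\bar\de_i(a)\ts\bar\de_j(b)\bigr).\]
Since $ab\in\bar B$ (because $\eps(ab)=\eps(a)\eps(b)=0$), \eqref{de-bde} also gives $\de_m(ab)=\sum_\tau\tau_*\bar\de_r(ab)$. This holds for every $m$, so we may invert it. Either apply the second formula of \eqref{de-bde}, i.e.\ Möbius inversion over the subsets $L\subseteq[m]$, or argue by induction on $m$: the term with $r=m$, $\tau=\id$, is the only new term. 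In both cases we conclude $\bar\de_m(ab)=\Phi_m$, which is the claim.

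\textbf{Main obstacle.} The delicate point is the inversion in Step 2. One has to check that $\Phi_r$ lies in $\bar B^{\ts r}$. This holds because each slot of $\si_*(\bar\de_i(a)\ts\bar\de_j(b))$ is either an entry from $\bar B$ or a product of two such entries, and $\bar B$ is an ideal.

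One must also check that the decomposition $\sum_\tau\tau_*(\cdot)$ of elements of this form is unique. This uniqueness is what justifies equating the components $\Phi_m$ and $\bar\de_m(ab)$. It follows from $B^{\ts m}=\bigoplus_{L\subseteq[m]}\tau_{L*}\bar B^{\ts|L|}$, using the splitting $B=K1\oplus\bar B$ in each factor.
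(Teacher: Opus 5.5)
Your proposal is correct and follows the paper's proof. The paper likewise uses \eqref{de-bde} to reduce the claim to $\sum_{\tau}\tau_*\Phi_r=\de_n(ab)$ for all $n$ (with your $\Phi_r$), and proves this by the same bijection between pairs of increasing embeddings $(\alpha,\beta)$ and pairs (embedding of $I\cup J$, quasi-shuffle), which recombines to $\de_n(a)\de_n(b)=\de_n(ab)$. The inversion you single out as the main obstacle is purely formal: your induction on $m$ needs neither $\Phi_r\in\bar B^{\ts r}$ nor uniqueness of the decomposition, which is why the paper handles it with a single ``it is enough to show''.
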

\begin{proof}
By \eqref{de-bde}, it is enough to show that for every $n\ge1$
\[\sum_{\ov{\ta:[m]\emb[n]}{m\ge1}}\ta_*
\sum_{\ov{\si\in\qsh(i,j;m)}{i,j\ge1}}\si_*(\bar\de_i(a)\ts\bar\de_j(b))
=\de_n(ab).
\]
The left hand side is
$
\sum_{i,j\ge1}
\sum_{\si'\col[i]\emb[n]}\sum_{\si''\col[j]\emb[n]}
\si'_*\bar\de_i(a)\cdot \si''_*\bar\de_j(b)
=\de_n(a)\cdot \de_n(b)=\de_n(ab).$
\end{proof}

\begin{lemma}\label{coaugm filtr}
Let $C$ be a coaugmented coalgebra, $C_0=K1_C$ and
\[C_{n+1}=C_n\wedge C_0=\sets{x\in C}{\de(x)\in C_n\ts C+C\ts C_0},\qquad n\ge0.\]
Then
\begin{enumerate}
\item $C_0\sbs C_1\sbs\dots$ and $\de(C_n)\sbs\sum_{i+j=n}C_i\ts C_j$.
\item $\C_n:=C_n\cap\C=\Ker(\bde_{n+1})$.
\item $\C_{n+1}=\bde\inv(\C_n\ts\C_n)$.
\end{enumerate}
\end{lemma}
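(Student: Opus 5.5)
I will prove part (2) first, since it is the workhorse for (1) and (3), and then return to (1). Throughout, I use the decomposition $C=C_0\oplus\C$ given by the coaugmentation. I also use the formula $\de(x)=x\ts1+1\ts x+\bde(x)$ for $x\in\C$, together with $\de(1)=1\ts1$.

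\emph{Step 1: the description of $\C_n$ (part (2)).} I argue by induction on $n$. For $n=0$ we have $\C_0=C_0\cap\C=0=\Ker(\bde_1)$, because $\bde_1=\id$. Assume $\C_n=\Ker(\bde_{n+1})$ and take $x=\la1+\bar x\in C$. Since $C_0\sbs C_n$ (to be checked in Step 2, or directly by induction from $1\in C_n\wedge C_0$), the condition $\de(x)\in C_n\ts C+C\ts C_0$ is equivalent to the same condition for $\bar x$. Now $\bar x\ts1\in C\ts C_0$ and $1\ts\bar x\in C_n\ts C$, so the condition reduces to $\bde(\bar x)\in(C_n\ts C+C\ts C_0)\cap(\C\ts\C)$.

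Decompose $C\ts C$ into the four summands $C_0\ts C_0$, $C_0\ts\C$, $\C\ts C_0$ and $\C\ts\C$. Using $C_n=C_0\oplus\C_n$, the intersection above is exactly $\C_n\ts\C$. This is the one place where linear algebra with tensor products of subspaces is needed, and it is the step I expect to require the most care. Consequently $\C_{n+1}=\bde\inv(\C_n\ts\C)$. By induction this equals $\bde\inv(\Ker\bde_{n+1}\ts\C)$, which is $\Ker((\bde_{n+1}\ts\id)\bde)$. Finally, coassociativity of $\bde$ gives $(\bde_{n+1}\ts\id)\bde=\bde_{n+2}$, which completes the induction.

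\emph{Step 2: part (3).} Coassociativity also gives $\bde_{n+2}=(\id\ts\bde_{n+1})\bde$. So the same argument shows $\C_{n+1}=\bde\inv(\C\ts\C_n)$. Intersecting the two descriptions gives $\bde\inv(\C_n\ts\C)\cap\bde\inv(\C\ts\C_n)=\bde\inv(\C_n\ts\C_n)$, because $(\C_n\ts\C)\cap(\C\ts\C_n)=\C_n\ts\C_n$ for subspaces.

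\emph{Step 3: part (1).} The inclusions $C_n\sbs C_{n+1}$ follow from part (2), since $\Ker\bde_{n+1}\sbs\Ker\bde_{n+2}$; the base case is $1\in C_1$. For the coproduct estimate, I show $\bde(\C_n)\sbs\sum_{i+j=n,\,i,j\ge1}\C_i\ts\C_j$ by induction on $n$. The base cases are immediate: $\bde(\C_1)=0$ by (2), and $\bde(\C_2)\sbs\C_1\ts\C_1$ by (3).

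For the inductive step, choose a complement filtration, that is, a basis adapted to $\C_1\sbs\C_2\sbs\cdots$. Write $\bde(x)=\sum y_k\ts z_k$ with the $y_k$ linearly independent and compatible with this filtration. Then $(\bde_{i+1}\ts\bde_{j+1})\bde$ is a component of $\bde_{i+j+2}$, so it vanishes on $\C_n$ whenever $i+j+2\ge n+1$. From this vanishing one deduces that each $y_k$ of filtration level exactly $i$ is paired with a $z_k\in\C_{n-i}$.

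Adding back the terms $x\ts1+1\ts x$ then yields $\de(C_n)\sbs\sum_{i+j=n}C_i\ts C_j$. The adapted-basis argument in this last step is the main technical point. It is standard, in the style of Sweedler's coradical filtration lemma, and I would follow \sw\ closely here.
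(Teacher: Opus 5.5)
Your proof is correct and, for parts (2) and (3), follows the paper's argument: induction via $\bde_{n+2}=(\bde_{n+1}\ts\mathrm{id})\bde=(\mathrm{id}\ts\bde_{n+1})\bde$ together with $(\C_n\ts\C)\cap(\C\ts\C_n)=\C_n\ts\C_n$, and your explicit computation $(C_n\ts C+C\ts C_0)\cap(\C\ts\C)=\C_n\ts\C$ supplies a step the paper leaves implicit. For part (1) the paper only cites Sweedler (9.0.0.i and 9.1.6), and your adapted-basis sketch is essentially that argument; it works because $\bde_i$ is injective on any complement of $\C_{i-1}=\Ker\bde_i$ in $\C$ (adapt the basis on all of $\C$, since $\bigcup_n\C_n$ need not exhaust $\C$ here), and it uses only part (2), not the induction on $n$ you set up.
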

\begin{proof}
\clm1
This follows from
\cite[9.0.0.i]{sweedler_hopf} and \cite[9.1.6]{sweedler_hopf}.

\clm2
For $n=0$ we have $\bar C_0=0=\Ker(\bde_1)$.
If $x\in\Ker(\bde_{n+2})$, then $\bde(x)\in
(\C_{n}\ts \C)\cap(\C\ts \C_{n})=\C_n\ts \C_{n}$ by induction.
Therefore $\de(x)\in C_n\ts C_n+C_0\ts C+C\ts C_0\sbs C_n\ts C+C\ts C_0$, hence $x\in \C_{n+1}$.
Conversely, if $x\in\C_{n+1}$, then $\bde(x)\in\C_n\ts \C=\Ker(\bde_{n+1})\ts \C$ by induction.
Therefore $\bde_{n+2}(x)=0$.

\clm3
We have seen that if $x\in \C_{n+1}=\Ker(\bde_{n+2})$, then $\bde(x)\in\C_n\ts\C_n$.
Conversely, if $x\in\C$ and $\bde(x)\in\C_n\ts\C_n\sbs \C_n\ts\C$, then
$\de(x)\in C_n\ts C+C\ts C_0$, hence $x\in \C_{n+1}$.
\end{proof}

\begin{definition}
A coalgebra $(C,\de,\eps)$ is called
\begin{enumerate}
\item
\idef{simple} if $C\ne0$ and the only subcoalgebras of $C$ are $0$ and $C$.
%\item
%\idef{connected} if $C$ contains a unique simple coalgebra,
%and it is 1-dimensional.
\item
\idef{connected}
if the coradical $\corad(C)$ (the sum of simple subcoalgebras)
is 1-dimensional.
\item \idef{conilpotent} if it is coaugmented
and $\bar C=\bigcup_{n\ge1}\Ker(\bar\de_n)$.
%for all $x\in \bar C$
%we have $\bar\de_n(x)=0$ for some $n\ge1$.
\end{enumerate}
\end{definition}

\begin{remark}
A coalgebra $C$ is connected \ifft it is pointed (all simple subcoalgebras of $C$ are 1-dimensional) and irreducible ($C$ has a unique simple subcoalgebra).
Some authors use conilpotency as the definition of connectedness for coalgebras (see \eg \cite[\S B.3]{quillen_rational}).
\end{remark}

\begin{theorem}
For a coalgebra $C$ \tfae
\begin{enumerate}
\item $C$ is conilpotent \wrt some coaugmentation.
\item
There exists an increasing filtration $C_0\sbs C_1\sbs\dots\sbs C$ by subspaces such that
$\de(C_n)\sbs\sum_{i+j=n}C_i\ts C_j$,
$C=\bigcup_{n\ge0}C_n$ and
$\dim C_0=1$.

\item $C$ is connected.
\end{enumerate}
\end{theorem}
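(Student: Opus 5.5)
I will prove the cycle of implications (1)$\Rightarrow$(2)$\Rightarrow$(3)$\Rightarrow$(1), relying on Lemma \ref{coaugm filtr} and standard facts from \sw[Ch.~8--9] on the coradical and the coradical filtration.

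For (1)$\Rightarrow$(2), let $C$ be conilpotent with respect to a coaugmentation $\eta$, with group-like element $1_C$. I take the filtration $C_0=K1_C$, $C_{n+1}=C_n\wedge C_0$ from Lemma \ref{coaugm filtr}. Part (1) of that lemma gives $C_0\subset C_1\subset\cdots$ and $\de(C_n)\subset\sum_{i+j=n}C_i\otimes C_j$. Part (2) gives $C_n=K1_C\oplus\Ker(\bde_{n+1})$. Conilpotency says $\bar C=\bigcup_n\Ker(\bde_n)$, so $C=K1_C\oplus\bar C=\bigcup_n C_n$. Finally $\dim C_0=1$ by construction.

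For (2)$\Rightarrow$(3), I first show that $C_0$ is a subcoalgebra. The condition with $n=0$ gives $\de(C_0)\subset C_0\otimes C_0$, so $C_0=Kx$ with $\de(x)=\lambda x\otimes x$. Counitality forces $\lambda\eps(x)=1$, so $g=\lambda x$ is group-like and $C_0=Kg$ is simple. Next I show that every simple subcoalgebra $D$ equals $C_0$; this gives $\corad(C)=C_0$, which is one-dimensional.
\begin{itemize}
\item[(a)] $D$ is finite-dimensional and $C=\bigcup C_n$, so $D\subset C_n$ for some $n$; choose $n$ minimal.
\item[(b)] Suppose $n\ge1$. Let $\pi:C\to C/C_{n-1}$ be the projection. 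The filtration condition gives $(\pi\otimes\pi)\de(C_n)\subset\sum_{i+j=n}\pi(C_i)\otimes\pi(C_j)=0$, since every term has $i\le n-1$ or $j\le n-1$ once $n\ge1$.
\item[(c)] For $x\in D$ write $\de(x)\in D\otimes D$. Then $(\pi\otimes\pi)\de(x)=0$ says $\de(D)\subset D'\otimes D+D\otimes D'$, where $D'=D\cap C_{n-1}$, i.e.\ $D\subset D'\wedge D'$.
\item[(d)] $D'$ is a subcoalgebra of $D$, being an intersection of subcoalgebras, provided each $C_{n-1}$ is a subcoalgebra (which it is, by the filtration condition). By simplicity $D'$ is $0$ or $D$. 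If $D'=0$ then $0\wedge0=0$ would contain $D$, which is impossible. Hence $D'=D$, i.e.\ $D\subset C_{n-1}$, contradicting minimality.
\end{itemize}
Therefore $n=0$ and $D=C_0$.

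For (3)$\Rightarrow$(1), let $\corad(C)=Kg$. Then $g$ is group-like, since a one-dimensional coalgebra is spanned by a group-like element; use it as the coaugmentation. The coradical filtration $C_0=\corad C$, $C_{n+1}=C_n\wedge C_0$ is then exactly the filtration of Lemma \ref{coaugm filtr}. Its exhaustiveness, $C=\bigcup C_n$, is the standard fact \sw[9.1.6] (Taft–Wilson), which rests on the fundamental theorem that every element lies in a finite-dimensional subcoalgebra. Given exhaustiveness, Lemma \ref{coaugm filtr}(2) yields $\bar C=\bigcup_n\Ker(\bde_{n+1})$, which is conilpotency.

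The main obstacle is this exhaustiveness in (3)$\Rightarrow$(1). If citing \sw[9.1.6] is not acceptable, I would reduce to a finite-dimensional subcoalgebra $D$ and dualize to the finite-dimensional algebra $D^*$. There the coradical of $D$ corresponds to $D^*/\mathrm{rad}(D^*)$, and nilpotency of the Jacobson radical $J$ gives $D=(D_0)^{\wedge N}$, since the annihilator of $J^N=0$ is all of $D$.
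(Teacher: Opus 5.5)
Your proof is correct, but it is organized differently from the paper's. The paper proves (1)$\Leftrightarrow$(2) and (2)$\Leftrightarrow$(3):
\begin{enumerate}
\item Its (2)$\Rightarrow$(1) is a direct linear-algebra argument valid for any filtration as in (2): $\bar\delta(\bar C_n)\subset\sum_{i=1}^{n-1}\bar C_i\otimes\bar C_{n-i}$, which forces $\bar\delta_{n+1}(\bar C_n)=0$.
\item Its (2)$\Rightarrow$(3) simply cites \cite[11.1.1]{sweedler_hopf}.
\item Its (3)$\Rightarrow$(2) takes the coradical filtration and cites its exhaustiveness (\cite[9.0.4]{sweedler_hopf} in the paper).
\end{enumerate}

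Your (1)$\Rightarrow$(2) is the same as the paper's. You then close a cycle through (3) instead of proving (2)$\Rightarrow$(1) directly.

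Your minimal-$n$ argument for (2)$\Rightarrow$(3) is in effect a self-contained proof of the cited Sweedler result: $D'=D\cap C_{n-1}$ is a subcoalgebra with $\delta(D)\subset D'\otimes D+D\otimes D'$, so $D'=D$ by simplicity and counitality. Your check that $C_0$ is spanned by a group-like element also supplies a point the paper uses tacitly when it speaks of ``the group element $1_C\in C_0$''.

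Your (3)$\Rightarrow$(1) is the paper's (3)$\Rightarrow$(2) combined with part (2) of Lemma \ref{coaugm filtr}. That lemma applies because, for the coaugmentation given by $g$, the coradical filtration is exactly the filtration $C_{n+1}=C_n\wedge C_0$ it treats.

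The trade-off is this. Your cycle needs only three implications and proves more of its ingredients itself. However, it makes (2)$\Rightarrow$(1) depend on the finite-dimensionality of simple coalgebras and on the exhaustiveness of the coradical filtration. The paper's direct argument shows that any filtration as in (2) already gives conilpotency with respect to $1_C$, with no appeal to the coradical at all.
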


\begin{proof}
\dir12
Let $C_0=K1_C$ and $C_{n+1}=C_n\wedge C_0$ for $n\ge0$.
%=\sets{x\in C}{\de(x)\in C_n\ts C+C\ts C_0}$.
We have seen that $C_0\sbs C_1\sbs\dots\sbs C$,
that $\de(C_n)\sbs\sum_{i+j=n}C_i\ts C_j$
and that $C_n\cap\C=\Ker(\bde_{n+1})$.
This implies that $\bigcup_n C_n=C$.

\dir21
Consider the coaugmentation corresponding to the group element
$1_C\in C_0$.
Let $\C_n=C_n\cap\bar C$ for $n\ge0$.
Then $\bde(\C_n)\sbs\sum_{i=1}^{n-1}\C_i\ts \C_{n-i}$.
Therefore $\bde_{n+1}(\C_n)=0$,
hence $\C_n\sbs\Ker(\bde_{n+1})$ and $\bigcup_n\Ker(\bde_n)=\C$.

\dir23
The coalgebra $C_0$ contains all simple subcoalgebras of $C$ by
\cite[11.1.1]{sweedler_hopf}.
Since $\dim C_0=1$, we have $\corad(C)=C_0$.

\dir32
Let $C_0=\corad(C)$.
The coradical filtration of $C$, defined by
$C_{n+1}=C_n\wedge C_0$,
satisfies $C_0\sbs C_1\sbs\dots$,
%by \cite[9.0.0.i]{sweedler_hopf},
$\de(C_n)\sbs\sum_{i+j=n}C_i\ts C_j$,
%by \cite[9.1.6]{sweedler_hopf}.
and $C=\bigcup_{n\ge0}C_n$
by \cite[9.0.4]{sweedler_hopf}.
By assumption, $\dim C_0=1$.
\end{proof}

\begin{example}
For a vector space $V$ over a field $K$,
the cofree coalgebra $\Tc(V)=\bop_{n\ge0}V^{\ts n}$
is equipped with the coproduct and the counit \eqref{coprod1}
\begin{equation}
\de(v_1\dots v_n)=\sum_{i=0}^n v_1\dots v_i\ts v_{i+1}\dots v_n
,\qquad \eps:\Tc(V)\to V^{\ts 0}=K.
\end{equation}
It is a conilpotent coalgebra with the coaugmentation $\eta:K=V^{\ts 0}\to \Tc(V)$.
We obtain a functor $\Tc:\Vect\to\CC$
from the category of vector spaces over $K$
to the category of connected coalgebras.
\end{example}

\begin{theorem}[{\sw[12.0.2]}]
\label{free univ}
The functor $\Tc:\Vect\to\CC$ is right adjoint to the forgetful functor $F:\CC\to\Vect$, $C\mto\bar C$.
Explicitly, there is a canonical isomorphism
\[\Hom_{\Vect}(\bar C,V)\iso\Hom_{\Coalg}(C,\Tc V),\qquad
C\in\CC,\, V\in\Vect,
\]
where $f:\bar C\to V$ corresponds to $\tl f:C\to \Tc V$
defined by $\tl f(1)=1$ and $\tl f|_{\bar C}=\sum_{n\ge1}f^{\ts n}\bar\de_n$.
\end{theorem}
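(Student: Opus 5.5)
We must prove the statement attributed to \sw[12.0.2]. Given $f:\bar C\to V$, we set $\tl f(1)=1$ and $\tl f|_{\bar C}=\sum_{n\ge1}f^{\ts n}\bar\de_n$. The proof is organized around this formula: show it is well defined and a counital coalgebra map, then show uniqueness, and finally naturality.

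\emph{Well-definedness.} Because $C$ is connected, the theorem above shows it is conilpotent with respect to the coaugmentation given by the group-like element spanning $\corad(C)$. Hence every $x\in\bar C$ satisfies $\bar\de_n(x)=0$ for $n\gg0$, so the sum defining $\tl f(x)$ is finite.

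\emph{Counit.} We have $\eps\tl f(1)=1$, and $\tl f(\bar C)\sbs\bop_{n\ge1}V^{\ts n}=\Ker\eps$. Thus $\tl f$ respects the counit.

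\emph{Comultiplicativity.} It suffices to check this on $\bar C$, since $\tl f(1)=1$ is group-like. The reduced coproduct of $\Tc V$ on $V^{\ts n}$ is $\bar\de(v_1\dots v_n)=\sum_{i=1}^{n-1}v_1\dots v_i\ts v_{i+1}\dots v_n$. By coassociativity of $\bar\de$ we have $(\bar\de_i\ts\bar\de_{n-i})\bar\de=\bar\de_n$ for $1\le i<n$. Therefore
\[\bar\de\,\tl f(x)=\sum_{n\ge2}\sum_{i=1}^{n-1}(f^{\ts i}\ts f^{\ts n-i})\bar\de_n(x)=\sum_{i,j\ge1}(f^{\ts i}\bar\de_i\ts f^{\ts j}\bar\de_j)\bar\de(x)=(\tl f\ts\tl f)\bar\de(x).\]
Adding the terms $x\ts1+1\ts x$, which $\tl f$ carries to $\tl f(x)\ts1+1\ts\tl f(x)$, gives $\de\tl f=(\tl f\ts\tl f)\de$. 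The projection of $\tl f|_{\bar C}$ onto $V^{\ts1}=V$ is $f\bar\de_1=f$, so $f$ is recovered from $\tl f$.

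\emph{Uniqueness.} This is the main point. Let $g:C\to\Tc V$ be a morphism in $\CC$, i.e.\ a coalgebra map preserving the coaugmentation, so $g(1)=1$. Then $g(\bar C)\sbs\overline{\Tc V}$, and $g$ intertwines the reduced coproducts, hence also their iterates: $\bar\de_n g=g^{\ts n}\bar\de_n$. Let $p_1:\overline{\Tc V}\to V$ be the projection. On $\bigoplus_{n\ge1}V^{\ts n}$ the map $p_1^{\ts n}\bar\de_n$ is exactly the projection onto $V^{\ts n}$: applied to a word of length $m$, it produces the splittings into $n$ nonempty pieces, and $p_1^{\ts n}$ kills every piece of length greater than $1$, so only $m=n$ survives. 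Hence, with $f:=p_1 g$,
\[g(x)=\sum_n p_1^{\ts n}\bar\de_n g(x)=\sum_n (p_1g)^{\ts n}\bar\de_n(x)=\tl f(x).\]
So $g$ is determined by $p_1g$, and $f\mto\tl f$ and $g\mto p_1g$ are mutually inverse bijections.

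\emph{Naturality.} Naturality in $C$ and $V$ follows because the formula is functorial: for a morphism $h:C'\to C$ in $\CC$ we have $\bar\de_n h=h^{\ts n}\bar\de_n$, and a linear map $V\to V'$ commutes with tensor powers.

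The main obstacle is the uniqueness step. It uses that morphisms in $\CC$ preserve the coaugmentation, so that $g(\bar C)\sbs\overline{\Tc V}$ and the reduced coproducts are intertwined; this holds because the group-like element $1$ must map into the coradical, which in $\Tc V$ is $K\cdot1$.
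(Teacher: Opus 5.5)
Your proof is correct and complete: finiteness of the sum comes from conilpotency, comultiplicativity from $(\bar\de_i\ts\bar\de_j)\bar\de=\bar\de_{i+j}$, and uniqueness from the fact that $p_1^{\ts n}\bar\de_n$ is the projection of $\overline{\Tc V}$ onto $V^{\ts n}$. The paper does not prove this statement itself but only cites Sweedler [12.0.2], so there is no in-paper argument to compare against; your direct verification is the standard one, and it uses the same explicit formula $\tl f|_{\bar C}=\sum_{n\ge1}f^{\ts n}\bar\de_n$ that the paper relies on in its later Remark.
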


\subsection{Universal property of quasi-shuffle algebras}
For a non-unital algebra $(A,\circ)$, consider the composition
\[
\Tc(A)\ts \Tc(A)\to
(A\ts K)\oplus (K\ts A)\oplus (A\ts A)\to A,\]
where the first map is the projection and the second map is given by
\[
a\ts 1_K\mto a,\qquad
1_K\ts b\mto b,\qquad
a\ts b\mto a\circ b,\qquad
a,b\in A.\]
Since the coalgebra $\Tc(A)\ts \Tc(A)$ is conilpotent,
the above map induces a coalgebra morphism
$*:\Tc(A)\ts \Tc(A)\to\Tc(A)$
by \cref{free univ}.
It is precisely the quasi-shuffle product
\cite{newman_cofree,loday_algebra,aguiar_monoidal}.
This endows $\Tc(A)$ with the structure of a conilpotent bialgebra.

By \cite[9.2.2]{sweedler_hopf} every conilpotent bialgebra $(H,\mu,\eta,\de,\eps)$ is a Hopf algebra.
Explicitly, the antipode is given by $S(1)=1$ and
%(\cf \cite{loday_structure})
\[S(x)=\sum_{n\ge1}(-1)^n\mu_n\bar \de_n(x),\qquad x\in\bar H=\Ker\eps,\]
where $\mu_n:H^{\ts n}\to H$ is defined by $\mu_1=\id$ and $\mu_n=\mu(\id\ts\mu_{n-1})$ for $n\ge2$.

\begin{theorem}[{\cite{newman_cofree}}]
For every non-unital algebra $(A,\circ)$,
the product $*$ equips $\QS(A)=\Tc(A)$ with the conilpotent Hopf algebra structure,
called the \idef{quasi-shuffle} algebra of $A$.
The functor $\QS:\Ass_\nun\to\Hopf_\con$ is right adjoint to the forgetful functor
$F:\Hopf_\con\to\Ass_\nun$, $H\mto\bar H$.
Equivalently, for every $A\in\Ass_\nun$ and $H\in\Hopf_{\con}$,
there is a canonical isomorphism
\[\Hom_{\Ass_\nun}(\bar H,A)\iso\Hom_{\Hopf_\con}(H,\QS (A)).\]
\end{theorem}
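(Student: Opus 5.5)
The plan has two parts. First I show that $(\QS(A),*,\de,\eps)$ is a conilpotent Hopf algebra. Then I verify the adjunction by building the bijection explicitly from \cref{free univ} together with \cref{qs coprod}.

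\emph{Hopf structure.} The product $*$ was defined as the coalgebra morphism $\Tc(A)\ts\Tc(A)\to\Tc(A)$ induced, through \cref{free univ}, by the map $p:\overline{\Tc(A)\ts\Tc(A)}\to A$ that sends $a\ts1\mapsto a$, $1\ts b\mapsto b$, $a\ts b\mapsto a\circ b$, and is zero on higher tensor degrees. So $*$ is compatible with $\de$ by construction.

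\begin{itemize}
\item \emph{Unit.} The maps $u\mapsto u*1$ and $u\mapsto u$ are both coalgebra maps into $\Tc(A)$ whose projections to $A$ agree. Uniqueness in \cref{free univ} then gives $u*1=u$, and likewise $1*u=u$.
\item \emph{Associativity.} Both $*(*\ts\id)$ and $*(\id\ts*)$ are coalgebra morphisms $\Tc(A)^{\ts3}\to\Tc(A)$, since $\Tc(A)^{\ts3}$ is conilpotent. Hence it suffices to compare their compositions with the projection $\pi_1:\Tc(A)\to A$.
\item \emph{Computing $\pi_1$.} We have $\pi_1\circ *=p$. So $\pi_1(*(*\ts\id))$ applied to $u\ts v\ts w$ only sees the degree-$\le1$ parts of $u*v$ and of $w$. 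These are $\pi_1(u*v)$, which is nonzero only when $u,v$ have degree $\le1$, and $w$ itself.
\item \emph{Comparison.} Both sides reduce to the "sum of all nonempty sub-products" formula on $A\oplus K$. For instance, on $a\ts b\ts c$ both give $(a\circ b)\circ c=a\circ(b\circ c)$. This is exactly where associativity of $\circ$ enters.
\end{itemize}

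It follows that $\QS(A)$ is a conilpotent bialgebra, and hence a Hopf algebra by \sw[9.2.2]. One can also check that $*$ matches \eqref{qs-prod1}. Both sides are coalgebra maps, or one can argue inductively on degree, but this check is not needed for the adjunction.

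\emph{Adjunction.} Given a non-unital algebra morphism $f:\bar H\to A$, let $\tl f:H\to\Tc(A)$ be the coalgebra morphism $\tl f|_{\bar H}=\sum_n f^{\ts n}\bde_n$ of \cref{free univ}. Its unit is $\tl f(1)=1$. The key claim is that $\tl f$ is multiplicative.

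\begin{itemize}
\item \emph{Reduction to $\pi_1$.} Both $\tl f\circ\mu$ and $*\circ(\tl f\ts\tl f)$ are coalgebra morphisms $H\ts H\to\Tc(A)$, because $*$ is a coalgebra map. By the uniqueness part of \cref{free univ} applied to the conilpotent coalgebra $H\ts H$, it suffices to check that $\pi_1\tl f\mu=\pi_1*(\tl f\ts\tl f)$ on $\overline{H\ts H}$.
\item \emph{Left side.} On $x\ts1$ and $1\ts y$ this gives $f(x)$ and $f(y)$. On $x\ts y$ with $x,y\in\bar H$ it gives $f(xy)$.
\item \emph{Right side.} This is $p(\tl f(x)\ts\tl f(y))$. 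On $x\ts y$ it only sees the degree-one parts $f(x)$ and $f(y)$, giving $f(x)\circ f(y)$.
\item \emph{Conclusion.} The two sides agree exactly because $f$ is multiplicative. Alternatively, one could verify multiplicativity degreewise using \cref{qs coprod} and \eqref{qs-prod3}: $f^{\ts m}$ converts $\si_*$ on $\bar H$ into $\si_*$ on $A$, again using that $f$ is an algebra map.
\end{itemize}

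Conversely, a Hopf morphism $g:H\to\QS(A)$ restricts to $\bar H\to\overline{\QS(A)}$. Composing with $\pi_1$ gives a linear map. It is multiplicative because $\pi_1(u*v)=\pi_1(u)\circ\pi_1(v)$ whenever $u,v\in\overline{\QS(A)}$. The two constructions are mutually inverse by \cref{free univ}, and naturality is clear. Bialgebra maps between Hopf algebras automatically preserve antipodes, so Hopf morphisms coincide with bialgebra morphisms here.

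The main obstacle is the associativity check. Its case analysis on degrees of $u,v,w$ must be done carefully, including the cases where one factor is $1$. This is the only place where associativity of $\circ$ is used, and I would organise it by writing $\pi_1\circ *$ on $(K\oplus A)^{\ts 2}$ as the product in the unitization $K\oplus A$.
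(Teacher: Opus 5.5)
Your argument is correct, but at the key step it takes a different route from the paper.

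\textbf{The paper's route.} The paper takes the bialgebra structure on $\QS(A)$ from the literature. It only asserts that the universally defined $*$ is the quasi-shuffle product and makes $\Tc(A)$ a conilpotent bialgebra. In the remark after the theorem, it proves that $\tilde f$ is multiplicative by direct computation. It expands $\tilde f(a)*\tilde f(b)$ using $\tilde f|_{\bar H}=\sum_n f^{\otimes n}\bar\de_n$ and the combinatorial formula \eqref{qs-prod3}. It then commutes $f^{\otimes m}$ past $\sigma_*$, because $f$ is an algebra map. It concludes with \cref{qs coprod}, which expresses $\bar\de_m(ab)$ as a quasi-shuffle sum of terms $\bar\de_i(a)\otimes\bar\de_j(b)$.

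\textbf{Your route.} You use the uniqueness half of \cref{free univ} on the conilpotent coalgebras $H\otimes H$ and $\Tc(A)^{\otimes 3}$. Everything then reduces to one fact: the truncation $\rho\colon\QS(A)\to K\oplus A$ to tensor degrees $\le 1$ is multiplicative, i.e.\ $\rho(x*y)=\rho(x)\rho(y)$ in the unitization. This single observation gives:
\begin{enumerate}
\item unitality and associativity of $*$;
\item the converse direction, that $\pi_1 g|_{\bar H}$ is an algebra map for a Hopf morphism $g$;
\item multiplicativity of $\tilde f$.
\end{enumerate}
The paper writes out only the third.

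\textbf{What each buys.} Your route is more economical and self-contained: it never needs to identify the abstract $*$ with \eqref{qs-prod3}. Two small points to tighten:
\begin{enumerate}
\item You rely on tensor products of conilpotent coalgebras being conilpotent. State this explicitly, for instance via the filtration $(C\otimes D)_n=\sum_{i+j=n}C_i\otimes D_j$.
\item In your associativity bullet, the degree-zero part $\varepsilon(u*v)=\varepsilon(u)\varepsilon(v)$ also contributes. The unitization formulation handles this automatically.
\end{enumerate}
The paper's computation, in exchange, works at the level of words and shows why quasi-shuffles appear at all: they govern iterated reduced coproducts of products in any bialgebra.
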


\begin{remark}
Let us show that for an algebra morphism $f:\bar H\to A$,
the induced map $\tl f:H\to\QS(A)$ from Theorem \ref{free univ}
is a morphism of Hopf algebras.
It is enough to show for $a,b\in\bar H$ that
$\tl f(a)*\tl f(b)=\tl f(ab)$.
We have
\[\tl f(a)*\tl f(b)
=\sum_{\ov{\si\in\qsh(i,j;m)}{i,j,m\ge1}}
\si_* f^{\ts(i+j)}(\bar\de_i(a)\ts\bar\de_j(b))
=\sum_{\ov{\si\in\qsh(i,j;m)}{i,j,m\ge1}}
f^{\ts m}\si_* (\bar\de_i(a)\ts\bar\de_j(b))
\]
Applying Lemma \ref{qs coprod},
we obtain
$\tl f(a)*\tl f(b)=
\sum_{m\ge1} f^{\ts m}\bar\de_m(ab)=\tl f(ab)$.
\end{remark}

\def\A{A}
\subsection{Relation to CTD algebras}
\label{CTD}
A \idef{CTD algebra}
(commutative tridendriform algebra)
\cite{loday_algebra}
is a triple $(A,\circ,\prec)$, where $\circ,\prec$ are bilinear operations on a vector space $A$ such that
\begin{enumerate}
\item $(A,\circ)$ is commutative and associative.
\item $(a\circ b)\prec c=a\circ(b\prec c)$.
\item $(a\prec b)\prec c=a\prec(b\prec c+c\prec b+b\circ c)$.
\end{enumerate}
It is called unital if $(A,\circ)$ is unital.
The product
\begin{equation}
a*b=a\prec b+b\prec a+a\circ b
\end{equation}
is also commutative and associative.
Let $\CTD$ denote the category of CTD algebras.

\begin{example}\label{ex:CTD1}
A \idef{Rota-Baxter algebra} is a commutative non-unital algebra $(A,\circ)$ equipped with a linear map $P:A\to A$ satisfying
\begin{equation}\label{RB eq}
P(a)\circ P(b)=P(a\circ P(b)+P(a)\circ b+a\circ b).
\end{equation}
The map $P$ is called a \idef{Rota-Baxter operator}.
Such an algebra has a CTD algebra structure with $a\prec b=a\circ P(b)$.
Equation \eqref{RB eq} implies that the map $P:(A,*)\to (A,\circ)$ is an algebra morphism.
Conversely, if $(A,\circ,\prec)$ is a unital CTD algebra (meaning that $(A,\circ)$ is a unital algebra), then the map $P:A\to A$, $a\mto 1\prec a$, is a Rota-Baxter operator.
Indeed, we have $a\circ P(b)=a\prec b$, hence
\[P(a)\circ P(b)=(1\prec a)\prec b
=1\prec(a*b)
=P(a\circ P(b)+P(a)\circ b+a\circ b)
.\]
This implies that unital CTD algebras can be identified with unital Rota-Baxter algebras.
\end{example}

\begin{example}
Let $(A,\circ)$ be a commutative non-unital algebra.
Define bilinear operations
$*$ on $\QS(\A)=\bop_{n\ge0}A^{\ts n}$
and $\circ,\prec$ on $\QSp(\A)=\bop_{n\ge1}A^{\ts n}$
inductively by the rules $1_K*u=u*1_K=u$ and
\begin{enumerate}
\item $au\circ bv=(a\circ b)(u*v)$ for $a,b\in \A$ and $u,v\in\QS(\A)$.
\item $au\prec bv=a(u*bv)$ for $a,b\in \A$ and $u,v\in\QS(\A)$.
\item $u*v=u\prec v+v\prec u+u\circ v$
for $u,v\in\QSp(\A)$.
\end{enumerate}
In this way $\QSp(A)$ is equipped with a CTD algebra structure.
Note that the product $*$ is commutative and we have
\[(au)*(bv)=a(u*bv)+b(au*v)+(a\circ b)(u*v),\]
hence $*$ is the quasi-shuffle product.
If $A$ is unital, then the CTD algebra $\QSp(A)$ is unital, with the identity element $1_A\in A\sbs\QSp(A)$.
Therefore $\QSp(A)$ is equipped with the Rota-Baxter operator $P(u)=1_A\prec u=1_A\ts u$.
\end{example}

\begin{theorem}[{\cite{loday_algebra}}]
The functor $\QSp:\Com_{\nun}\to\CTD$
is left adjoint to the functor
\[F:\CTD\to\Com_\nun,\qquad (A,\circ,\prec)\mto(A,\circ).\]
Explicitly, for every $A\in\Com_\nun$ and $C\in\CTD$,
there is a canonical isomorphism
\[\Hom_{\CTD}(\QSp(A),C)\iso\Hom_{\Com_\nun}(A,C),\]
where a morphism $f:A\to C$ corresponds to
$\bar f:\QSp(A)\to C$ such that
$\bar f(au)=f(a)\prec \bar f(u)$ and $\bar f(a)=f(a)$
for $a\in A$ and $u\in\QSp(A)$.
\end{theorem}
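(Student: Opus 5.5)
Given a morphism $f:A\to C$ of commutative non-unital algebras, I will define $\bar f:\QSp(A)\to C$ by recursion on tensor length: $\bar f(a)=f(a)$ for $a\in A$, and $\bar f(au)=f(a)\prec\bar f(u)$ for $a\in A$ and $u\in\QSp(A)$. This is well defined and linear because $\QSp(A)=\bop_{n\ge1}A^{\ts n}$ and the recursion is multilinear in the tensor factors. Uniqueness is then immediate. Every element of $A^{\ts n}$ is a sum of terms $a_1\ts(a_2\dots a_n)$, and in $\QSp(A)$ we have $a\prec u=a(1_K*u)=au$. So any CTD morphism restricting to $f$ on $A$ must satisfy this recursion. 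It remains to show existence: $\bar f$ preserves $\circ$ and $\prec$. The correspondence is then a bijection, since restricting $\bar f$ to $A$ recovers $f$. Naturality in $A$ and $C$ is routine.

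To check that $\bar f$ preserves the operations, I extend $\bar f$ to $\QS(A)$ formally. I adjoin a unit, so that $1_K\mapsto 1$, and I use the conventions $x\prec 1=x$, $1\prec x=0$ and $x\circ 1=x\circ 1$. It is cleaner, though, to avoid the unit by working only with the identities for $u,v\in\QSp(A)$. I will prove, by simultaneous induction on the total length $m+n$ of $u\in A^{\ts m}$ and $v\in A^{\ts n}$, the three statements
\[\bar f(u\circ v)=\bar f(u)\circ\bar f(v),\qquad \bar f(u\prec v)=\bar f(u)\prec\bar f(v),\qquad \bar f(u*v)=\bar f(u)*\bar f(v).\]
The third follows from the first two by the definition $u*v=u\prec v+v\prec u+u\circ v$, valid both in $\QSp(A)$ and in $C$.

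For $\prec$, write $u=au'$. If $u'$ is empty, then $u\prec v=av$ and $\bar f(av)=f(a)\prec\bar f(v)$ holds by definition. Otherwise $u\prec v=a(u'*v)$, so
\[\bar f(u\prec v)=f(a)\prec\bar f(u'*v)=f(a)\prec(\bar f(u')*\bar f(v))\]
by induction. CTD axiom (3) gives $(f(a)\prec\bar f(u'))\prec\bar f(v)=f(a)\prec(\bar f(u')*\bar f(v))$, and the left side equals $\bar f(u)\prec\bar f(v)$.

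For $\circ$, write $u=au'$ and $v=bv'$, so $u\circ v=(a\circ b)(u'*v')$. I split into cases according to whether $u'$ and $v'$ are empty. If both are empty, the claim is that $f$ is multiplicative. If only $v'$ is empty, then $u\circ v=(a\circ b)u'$ and
\[\bar f(u\circ v)=f(a\circ b)\prec\bar f(u')=(f(b)\circ f(a))\prec\bar f(u').\]
By axiom (2) this equals $f(b)\circ(f(a)\prec\bar f(u'))=\bar f(v)\circ\bar f(u)$, which is correct by commutativity. If both are nonempty, I need
\[(f(a)\prec x)\circ(f(b)\prec y)=(f(a)\circ f(b))\prec(x*y)\]
with $x=\bar f(u')$ and $y=\bar f(v')$, after which the induction hypothesis for $*$ applies.

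The main obstacle is this last identity: the CTD axioms as listed do not state it directly. I will derive it from axioms (2) and (3) together with commutativity. By commutativity and (2),
\[(p\prec x)\circ(q\prec y)=p\circ\bigl(x\prec(q\prec y)\bigr)\quad\text{up to rearrangement}.\]
More carefully, $(p\prec x)\circ(q\prec y)=q\prec y$ composed via (2), giving $((p\prec x)\circ q)\prec y$, and then $(q\circ p)\prec x$ together with (3) should produce the symmetric expression. If this derivation turns out not to follow from the axioms as stated, I will take the identity as the standard additional CTD relation from \cite{loday_algebra}. In the unital/Rota–Baxter model it is exactly equation \eqref{RB eq} multiplied by $p\circ q$.
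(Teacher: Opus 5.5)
The paper gives no proof of this statement and only cites Loday, so your argument has to stand on its own, and it does. The recursive definition of $\bar f$, uniqueness via $a\prec u=a(1_K*u)=au$ in $\QSp(A)$, and the simultaneous induction on total length for $\prec$, $\circ$ and $*$ together form a complete proof. At each stage $*$ follows from the other two, and the case analysis for $\circ$ is right.

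The step you flagged as the main obstacle is not an extra relation. It follows directly from axioms (2), (3) and commutativity of $\circ$:
\[
(p\prec x)\circ(q\prec y)
=\bigl((p\prec x)\circ q\bigr)\prec y
=\bigl((q\circ p)\prec x\bigr)\prec y
=(q\circ p)\prec(x*y).
\]
The first equality is axiom (2) with $(a,b,c)=(p\prec x,q,y)$. The second is commutativity followed by axiom (2) with $(a,b,c)=(q,p,x)$. The third is axiom (3).

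This is exactly the chain you began to write in your last paragraph, so state it as a lemma and remove the hedge. Falling back on an ``additional relation'' from Loday would not be acceptable anyway, since it would change the category of CTD algebras in the statement.

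Two smaller clean-ups:
\begin{itemize}
\item Delete the first attempted line $(p\prec x)\circ(q\prec y)=p\circ\bigl(x\prec(q\prec y)\bigr)$, which is false in general.
\item Delete the aside about adjoining a unit. You never use it, and there $x\circ 1=x\circ 1$ is vacuous; compatibility with $x*1=x$ would force $x\circ 1=0$.
\end{itemize}
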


\begin{remark}
For a unital commutative algebra $A$, the CTD algebra $\ub\QS(A)=\bop_{n\ge1}A^{\ts n}$ is also a unital  Rota--Baxter algebra.
It is the free unital Rota--Baxter algebra generated by the algebra $A$ (\cf \cite{guo_baxter}).
For a non-unital commutative algebra $A$, we consider the unital algebra $A^+=K\oplus A$.
Then the free (non-unital) Rota--Baxter algebra $F_{\RB}(A)$
generated by the algebra $A$ is the Rota--Baxter subalgebra of $\ub\QS(A^+)$ generated by $A$.
It is equal to $\bop_{n\ge0}(A^+)^{\ts n}\ts A$.
This construction first appeared in \cite{cartier_structure}
for the free non-unital commutative algebra $A=\bop_{n\ge1}S^n(KX)$
generated by a set~$X$, and in \cite{guo_internal} for arbitrary $A$.
\end{remark}

\section{Multiple zeta values in \tpdf{\la}{lambda}-rings}

\subsection{Truncated MZV}
Let $K$ be a field of characteristic zero.
It has a \la-ring structure with Adams operations $\psi^n=\id$ for $n\ge1$.
Let $A$ be a $K$-algebra and a \la-ring such that $\psi^n|_K=\id$ for $n\ge1$.
% (see Appendix \ref{lambda}).
Let $\QS(A)=\bop_{n\ge0}A^{\ts n}$ be the quasi-shuffle algebra
of $A$ and let $\ep=1_K$ denote the identity element of $\QS(A)$.
For $n\ge1$, we define the \idef{truncated \mzf}
\begin{equation}
Z_n:\QS(A)\to A,\qquad
a_1\ts\dots\ts a_l\mto \sum_{n>n_1>\dots>n_l>0}\prod_{i=1}^l \psi^{n_i}(a_i),\qquad l\ge1,
\end{equation}
and $Z_n(\ep)=1$.
For $n\ge2$, we have
\begin{equation}
Z_n(au)=\sum_{n>m>0}\psi^m(a) Z_m(u),\qquad
a\in A,\,u\in \QS(A),
\end{equation}
where $au=a\ts u$.

\begin{lemma}\label{lm:homom1}
The map $Z_n:\QS(A)\to A$ is an algebra morphism for all $n\ge1$.
\end{lemma}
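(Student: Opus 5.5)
We prove multiplicativity, $Z_n(u*v)=Z_n(u)Z_n(v)$, by induction on $n$ and on the total length of $u$ and $v$. The unit is preserved since $Z_n(\ep)=1$, and the identity is trivial when $u$ or $v$ equals $\ep$. So we may assume $u=au'$ and $v=bv'$ with $a,b\in A$ and $u',v'\in\QS(A)$. For $n=1$ both sides vanish, since $Z_1$ kills every tensor of positive length (the sum is empty). Hence we assume $n\ge2$ and that the statement holds for all smaller $n$.

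The key identity is a ``telescoping'' recursion in $n$. Set $\De_m(w)=Z_{m+1}(w)-Z_m(w)$ for $m\ge1$. From the displayed recursion $Z_{m+1}(cw)=\sum_{m+1>k>0}\psi^k(c)Z_k(w)$, valid for $m\ge1$, we get
\[
\De_m(cw)=\psi^m(c)Z_m(w),\qquad c\in A,\ w\in\QS(A).
\]
Because $Z_1(cw)=0$, this gives $Z_n(x)=\sum_{m=1}^{n-1}\De_m(x)$ for every $x$ of positive length. It therefore suffices to show that, for each $1\le m<n$,
\[
\De_m(u*v)=Z_{m+1}(u)Z_{m+1}(v)-Z_m(u)Z_m(v).
\]
Summing this over $m$ telescopes to $Z_n(u)Z_n(v)-Z_1(u)Z_1(v)=Z_n(u)Z_n(v)$, since $Z_1(u)=Z_1(v)=0$.

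To verify the identity, expand $u*v=a(u'*v)+b(u*v')+(a\circ b)(u'*v')$ by \eqref{qs-prod1}. Applying $\De_m$ to each term gives
\[
\psi^m(a)Z_m(u'*v)+\psi^m(b)Z_m(u*v')+\psi^m(a\circ b)Z_m(u'*v').
\]
Here $m<n$, so the induction hypothesis (on $n$) makes $Z_m$ multiplicative. We also use that $\psi^m$ is a ring homomorphism, so $\psi^m(a\circ b)=\psi^m(a)\psi^m(b)$; this holds because Adams operations on a $\la$-ring are ring endomorphisms and the product on $A$ is its ring multiplication. The expression becomes
\[
\psi^m(a)Z_m(u')Z_m(v)+\psi^m(b)Z_m(u)Z_m(v')+\psi^m(a)\psi^m(b)Z_m(u')Z_m(v').
\]
On the other side, write $Z_{m+1}(u)=Z_m(u)+\psi^m(a)Z_m(u')$ and similarly for $v$. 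Expanding the product $Z_{m+1}(u)Z_{m+1}(v)$ and subtracting $Z_m(u)Z_m(v)$ leaves exactly the same three terms, using commutativity of $A$. This closes the induction.

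The main point of care is the bookkeeping of the base cases rather than any real obstacle. The recursion for $Z_{m+1}(cw)$ must hold also when $w=\ep$ and when $m=1$: then $Z_2(c)=\psi^1(c)$, which agrees with $\psi^1(c)Z_1(\ep)=\psi^1(c)$. The other potential subtlety is ensuring that the induction is on $n$ alone, so that all of $Z_m$ with $m<n$ are already known to be multiplicative on arbitrary pairs. That is exactly what the argument uses, so no nested induction on length is needed beyond the trivial $\ep$ cases.
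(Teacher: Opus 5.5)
Your proof is correct and is essentially the paper's argument. The paper also inducts on $n$ and expands $Z_n(au')Z_n(bv')$ as a double sum over $n>m,m'>0$. It then splits the sum according to $m>m'$, $m<m'$, $m=m'$, and for each fixed $m$ this gives exactly your three terms $\psi^m(a)Z_m(u')Z_m(v)$, $\psi^m(b)Z_m(u)Z_m(v')$ and $\psi^m(a\circ b)Z_m(u')Z_m(v')$. Your telescoping over $\Delta_m=Z_{m+1}-Z_m$ is just a reorganization of the same computation. As you note yourself, the announced induction on length is not actually needed.
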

\begin{proof}
The map $Z_1$ is the composition $\QS(A)\to K\emb A$, hence an algebra morphism.
By induction on $n$, we have, for $a,b\in A$ and $u,v\in \QS(A)$,
\begin{multline*}
Z_n(au) Z_n(bv)
=\sum_{n>m,m'>0}\psi^m(a)\psi^{m'}(b)Z_m(u)Z_{m'}(v)
\\
=\sum_{n>m>0}\rbig{
\psi^m (a) Z_m(u)Z_m(bv)
+\psi^m(b)Z_m(au)Z_m(v)
+\psi^m(a\circ b)Z_m(u)Z_m(v)}\\
=Z_n\rbig{a(u*bv)+b(au*v)+(a\circ b)(u*v)}
=Z_n(au*bv).
\tag*{\qedhere}
\end{multline*}
\end{proof}

\begin{remark}
The Adams operations $\psi^n:A\to A$ are algebra morphisms satisfying $\psi^m\psi^n=\psi^{mn}$.
Therefore they induce algebra morphisms $\psi^n:\QS(A)\to\QS(A)$
satisfying $\psi^m\psi^n=\psi^{mn}$.
This implies that $\QS(A)$ is itself a \la-ring.
The algebra morphism $Z_n:\QS(A)\to A$ is a morphism of $\la$-rings.
\end{remark}

Consider the \idef{multiple polylogarithm}
$\Lt:\QS(A)\to A\pser t$,
defined by
\begin{equation}
\Lt(a_1\ts\dots\ts a_l)
=\sum_{n_1>\dots>n_l>0}t^{n_1}\prod_{i=1}^l \psi^{n_i}(a_i)
\in A\pser t,\qquad l\ge1,
\end{equation}
and $\Lt(\ep)=1$.
For $a\in A$, consider the continuous $A$-linear operators
\begin{equation}\label{PY}
\P_a:tA\pser t\to tA\pser t,\quad t^n\mto\psi^n(a)t^n,\qquad
\cT:A\pser t\to tA\pser t,\quad f\mto \frac t{1-t}f.
\end{equation}
Note that the map $A\to\End_A(tA\pser t)$, $a\mto \P_a$,
is a ring morphism.

\begin{theorem}
\label{Zt-PY}
For $a\in A$ and $u=a_1\ts\dots\ts a_l\in A^{\ts l}$, we have
\[\Lt(au)=\P_a \cT\Lt(u)\qquad
\Lt(u)=\P_{a_1}\cT\dots \P_{a_l}\cT(1).\]
\end{theorem}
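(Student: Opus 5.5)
The first identity will be proved by a direct computation on power series. The second then follows from it by induction on $l$.

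Write $\Lt(u)=\sum_{m\ge0}c_m t^m$ with $c_m\in A$. For $u=a_1\ts\dots\ts a_l$ with $l\ge1$, the definition gives
\[c_m=\sum_{m=n_1>n_2>\dots>n_l>0}\prod_{i=1}^l\psi^{n_i}(a_i).\]
For $u=\ep$ we have $c_0=1$ and $c_m=0$ for $m\ge1$. In both cases $\sum_{0\le m<n}c_m=Z_n(u)$ for $n\ge1$. For $l\ge1$ this holds because summing over $m=n_1<n$ recovers the truncated \mzf. For $u=\ep$ it holds since $Z_n(\ep)=1$.

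Now compute $\cT\Lt(u)=\frac t{1-t}\sum_m c_mt^m$. The coefficient of $t^n$, for $n\ge1$, is $\sum_{0\le m<n}c_m=Z_n(u)$. Hence $\cT\Lt(u)=\sum_{n\ge1}Z_n(u)t^n\in tA\pser t$, and this is where the operator $\P_a$ becomes applicable.

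Applying $\P_a$, which is continuous and $A$-linear, gives
\[\P_a\cT\Lt(u)=\sum_{n\ge1}\psi^n(a)Z_n(u)t^n.\]
On the other side, splitting the defining sum of $\Lt(au)$ according to the first index $n_1=n$ gives
\[\Lt(au)=\sum_{n\ge1}t^n\psi^n(a)\sum_{n>n_2>\dots>n_{l+1}>0}\prod_{i\ge2}\psi^{n_i}(a_i)=\sum_{n\ge1}\psi^n(a)Z_n(u)t^n.\]
The two expressions agree, which proves $\Lt(au)=\P_a\cT\Lt(u)$. The case $u=\ep$ is included: there $\Lt(a)=\sum_n\psi^n(a)t^n$ and $Z_n(\ep)=1$.

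For the second formula, induct on $l$. The base case $l=0$ is $\Lt(\ep)=1$, the empty composition applied to $1$. The inductive step writes $u=a_1\ts u'$ and applies the first identity, so $\Lt(u)=\P_{a_1}\cT\Lt(u')$, followed by the induction hypothesis for $u'$.

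The only point needing care is the coefficient bookkeeping showing that $\cT$ turns $\Lt(u)$ into the generating series of the truncated values $Z_n(u)$, including the edge cases $u=\ep$ and $n=1$. Everything else is formal, and the convergence of all series in $A\pser t$ is automatic since we work coefficientwise.
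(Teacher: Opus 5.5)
Your proof is correct and is essentially the paper's argument: both identify $\mathcal{T}L_t(u)$ with the generating series $\sum_{n\ge1}Z_n(u)t^n$ of the truncated values, then split off the first index to get $L_t(au)=\sum_{n\ge1}\psi^n(a)Z_n(u)t^n=\mathcal{P}_a\mathcal{T}L_t(u)$, and obtain the second formula by iterating from the base case $l=0$, where the value is $1$. The only blemish is cosmetic: after splitting off $n_1=n$ in $L_t(au)$, the remaining factors should read $\psi^{n_i}(a_{i-1})$ rather than $\psi^{n_i}(a_i)$.
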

\begin{proof}
For $u=a_1\ts\dots\ts a_l$, we have
\[\bLt(u)
:=\sum_{n\ge1}Z_n(u)t^n
=\sum_{n>n_1>\dots>n_l>0}t^{n}\prod_{i=1}^l \psi^{n_i}(a_i)
=\frac t{1-t}\Lt(u)=\cT L_t(u).\]
%hence $\bLt=\cT\Lt$.
Therefore
\[
\Lt(au)=\sum_{n\ge1}\psi^n(a)Z_n(u)t^n
=\P_a\bLt(u)=\P_a\cT \Lt(u).
\]
%We have
%\[\bLt(au)
%=\sum_{n>m>0}\psi^m(a)Z_m(u)t^n
%=\frac t{1-t}\sum_{m>0}\psi^m(a)Z_m(u)t^m=\cT \P_a \bLt(u).
%\]
%hence
%$\Lt(au)=\P_a\cT\Lt(u)$.
For the second equation we note that $\Lt(\ep)=1$.
\end{proof}

\begin{corollary}\label{cor:Zt}
For $a\in A$ and $k\in\bN^{l}$,
we have
\[\Lt(a^{k_1}\ts\dots\ts a^{k_l})
=\P_a^{k_1}\cT\dots \P_a^{k_l}\cT(1).\]
\end{corollary}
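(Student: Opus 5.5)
This is immediate from \cref{Zt-PY} once we use that $a\mto\P_a$ is a ring morphism $A\to\End_A(tA\pser t)$, as noted after \eqref{PY}. Apply the second formula of \cref{Zt-PY} to $u=a^{k_1}\ts\dots\ts a^{k_l}$, that is, with $a_i=a^{k_i}$. This gives
\[\Lt(a^{k_1}\ts\dots\ts a^{k_l})=\P_{a^{k_1}}\cT\dots\P_{a^{k_l}}\cT(1).\]
It then remains to show that $\P_{a^{k}}=\P_a^{k}$ for every $k\in\bN$.

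For $k\ge1$ we check this on monomials $t^n$ with $n\ge1$. Since $\psi^n$ is a ring morphism, $\psi^n(a^k)=\psi^n(a)^k$. Hence $\P_{a^k}(t^n)=\psi^n(a)^kt^n=\P_a^k(t^n)$. Both sides are continuous and $A$-linear, so they agree on all of $tA\pser t$.

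The only point needing care is $k=0$, where $a^0$ must be read as $1_A$. Then $\psi^n(1)=1$, so $\P_{1}=\id=\P_a^0$, consistent with the multiplicativity of $a\mto\P_a$. The corresponding tensor factor is $1_A\in A$, which is not the unit $\ep$ of $\QS(A)$, so no collapse of tensor length occurs. I do not expect any real obstacle.
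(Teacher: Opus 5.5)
Your proposal is correct and follows the paper's own proof: the paper likewise applies \cref{Zt-PY} with $a_i=a^{k_i}$ and notes only that $\P_{a^n}=\P_a^n$. Your check of this identity via the multiplicativity of $\psi^n$, and your treatment of the case $k=0$, fill in details the paper leaves implicit.
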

\begin{proof}
We note that
$\P_{a^n}=\P_a^n$.
\end{proof}

\begin{remark}\label{iter}
The above result may be compared to the iterated integral formula
\cite{zagier_values,hoffman_algebra}
for the multiple zeta values and the multiple polylogarithms.
As in the introduction, let $U=\bop_{i\ge1}\bQ z_i$ with $z_i\circ z_j=z_{i+j}$.
We can identify $U$ with $a\bQ[a]$ via $z_i\mto a^i$.
Let
\[L_t:\QS(U)\to\bC\pser t,\qquad L_t(z_{k_1}\dots z_{k_l})
=\sum_{n_1>\dots>n_l>0}t^{n_1}\prod_i\frac1{n_i^{k_i}}.
\]
Consider the operators $X:t\bC\pser t\to t\bC\pser t$
and $Y:\bC\pser t\to t\bC\pser t$ given by
\[(Xf)(t)=\int_0^t f(s)\frac{ds}{s},\qquad
(Yf)(t)=\int_0^t f(s)\frac{ds}{1-s},
\]
satisfying $Y=X\cT$ and $X(t^n)=\frac1n t^n$,
where $\cT(f)=\frac t{1-t}f$.
Then \cite{hoffman_algebra}
%\cite[Theorem 6.1]{hoffman_algebra}
\begin{equation}
L_t(z_ku)=X^{k-1}Y L_t(u)=X^k\cT L_t(u).
\end{equation}
The iterated integral formula can be written in the form
\begin{equation}
L_t(z_{k_1}\dots z_{k_l})
=X^{k_1}\cT\dots X^{k_l}\cT(1).
\end{equation}
See also \cite{singer_q} and \eqref{Singer}
for a similar result for multiple $q$-zeta values.
\end{remark}

\subsection{Untruncated MZV}
Assume now that $A$ is a complete \la-ring \cite[\S2]{getzler_mixed},
equipped with a decreasing filtration
\[A=F^0A\sps A^+=F^1A\sps F^2A\sps\dots\]
such that $F^iA\cdot F^j A\sbs F^{i+j}A$ and $\psi^n(F^iA)\sbs F^{ni}A$.
We define the \idef{\mzf}
\begin{equation}
Z:\QS(A^+)\to A,\qquad u=a_1\ts\dots\ts a_l
\mto\sum_{n_1>\dots>n_l>0}\prod_{i=1}^l \psi^{n_i}(a_i).
\end{equation}
Equivalently,
\begin{equation}
Z(u)=\lim_{n\to\infty}Z_n(u)=\Lt(u)|_{t=1}.
\end{equation}
Here evaluation at $t=1$ is understood with respect to the
filtration topology on $A$.

\begin{theorem}
The map $Z:\QS(A^+)\to A$ is an algebra morphism.
Moreover,
\[Z(a_1\ts\dots\ts a_l)=\P_{a_1}\cT\dots \P_{a_l}\cT(1)|_{t=1}.\]
\end{theorem}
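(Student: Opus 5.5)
The plan is to obtain $Z$ as a limit of the truncated functions $Z_n$, which are already known to be algebra morphisms by Lemma~\ref{lm:homom1}, and to read off the operator formula from Theorem~\ref{Zt-PY} by evaluating at $t=1$. Everything hinges on convergence in the filtration topology of the complete $\la$-ring $A$. The filtration conditions give this directly. For $a_i\in A^+=F^1A$ we have $\psi^{n_i}(a_i)\in F^{n_i}A$. Hence each summand $\prod_i\psi^{n_i}(a_i)$ with $n_1>\dots>n_l>0$ lies in $F^{n_1+\dots+n_l}A\sbs F^{n_1}A$. For fixed $N$, only finitely many tuples $(n_1,\dots,n_l)$ have $n_1<N$. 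So the defining series of $Z(u)$ converges in $A$, and
\[Z(u)-Z_n(u)=\sum_{n_1\ge n>\dots>n_l>0}\prod_i\psi^{n_i}(a_i)\in F^{n}A,\]
which gives $Z(u)=\lim_{n\to\infty}Z_n(u)$ and justifies the stated equivalences.

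For the morphism property, we first check that $\QS(A^+)$ is a subalgebra of $\QS(A)$, which holds since $A^+$ is a non-unital subalgebra and is closed under $\circ$. We then fix $u,v\in\QS(A^+)$. By Lemma~\ref{lm:homom1} we have $Z_n(u*v)=Z_n(u)Z_n(v)$ for every $n$. Multiplication is continuous in the filtration topology because $F^iA\cdot F^jA\sbs F^{i+j}A$, and this gives
\[Z_n(u)Z_n(v)-Z(u)Z(v)=(Z_n(u)-Z(u))Z_n(v)+Z(u)(Z_n(v)-Z(v))\in F^nA.\]
The term $u*v$ is a finite sum of tensors with entries in $A^+$, so $Z_n(u*v)\to Z(u*v)$ as well. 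Since $A$ is complete, it is in particular Hausdorff, so limits are unique and we conclude $Z(u*v)=Z(u)Z(v)$. Finally, $Z(\ep)=1$ holds trivially.

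For the formula, Theorem~\ref{Zt-PY} gives the identity $\Lt(a_1\ts\dots\ts a_l)=\P_{a_1}\cT\dots\P_{a_l}\cT(1)$ in $A\pser t$. The coefficient of $t^{m}$ in $\Lt(u)$ is $\sum_{m=n_1>\dots>n_l>0}\prod_i\psi^{n_i}(a_i)\in F^mA$. Therefore evaluation at $t=1$, meaning $\sum_m c_m$, converges and equals $Z(u)$ by regrouping the absolutely convergent (filtration-convergent) sum. Regrouping is legitimate because only finitely many terms lie outside each $F^NA$.

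The main obstacle is purely this convergence and regrouping bookkeeping; the algebra is already done in Lemma~\ref{lm:homom1}. There is one subtle point: the evaluation must be applied to the final power series, not to the intermediate factors. The intermediate factor $\cT(1)=\frac{t}{1-t}$ does not converge at $t=1$, so the expression $\P_{a_1}\cT\dots\P_{a_l}\cT(1)|_{t=1}$ has to be read as evaluating the whole power series.
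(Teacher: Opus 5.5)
Your proposal is correct and takes the same route as the paper, whose proof simply says the statement follows from Lemma~\ref{lm:homom1} (passing to the limit $n\to\infty$) and Theorem~\ref{Zt-PY} (evaluating at $t=1$); you supply the convergence, continuity and Hausdorff bookkeeping that the paper leaves implicit. One harmless indexing slip: $Z(u)-Z_n(u)$ is the sum over $n_1\ge n$ with $n_1>n_2>\dots>n_l>0$, with no constraint $n>n_2$, and every such term still lies in $F^nA$, so your conclusion is unaffected.
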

\begin{proof}
This follows from Lemma \ref{lm:homom1}
and Theorem \ref{Zt-PY}.
\end{proof}

For a (non-unital) subalgebra $B\sbs A$,
consider the restriction
\begin{equation}
Z:\QS(B^+)\to A,\qquad
B^+=B\cap A^+.
\end{equation}
We denote its image by $\cZ(B^+)$ and call it the \idef{algebra of \mzv} on $B^+$.

\begin{remark}
More generally, given vector spaces $U\sbs V$, let
\[\T(U,V)=K\oplus (U\ts\T(V))\sbs\T(V)
=\bop\nolimits_{n\ge0}V^{\ts n}.\]
If $B\sbs A$ is a (non-unital) subalgebra, then the subspace
$\T(B^+,B)\sbs\QS(A)$ is a subalgebra, denoted by $\QS(B^+,B)$.
The \idef{\mzf}
\[Z:\QS(B^+,B)\to A,\qquad
a_1\ts\dots\ts a_l
\mto\sum_{n_1>\dots>n_l>0}\prod_{i=1}^l \psi^{n_i}(a_i),\]
is again an algebra morphism.
We denote its image by $\cZ(B)$.
More generally, for every subspace $V\sbs A$,
let $\cZ(V)$ be the image of the linear map $Z:\T(V^+,V)\to A$,
where $V^+=V\cap A^+$.
%If $B$ is closed under Adams operations
%(a possibly non-unital \la-subring of $A$),
%then $\QS(B^+,B)$ is a \la-ring and
%$Z:\QS(B^+,B)\to A$ is a \la-ring morphism.
%Therefore $\cZ(B)\sbs A$ is a \la-subring.
\end{remark}

\begin{example}
Given a commutative graded algebra $A=\bop_{k\in\bZ}A_k$ over $\bC$, we equip it with the \la-ring structure having Adams operations
%(\cf Example \ref{graded lambda})
\[\psi^n(a)=n^{-k} a,\qquad a\in A_k,\, n\ge1.\]
The corresponding multiple zeta values converge on $\QS(A_{\ge2},A_{\ge1})$, where $A_{\ge m}=\bop_{k\ge m}A_k$.

In particular, let $A=\bC[a]$ be graded with $\deg a=1$.
Then
\[Z(a^{s_1}\ts\dots\ts a^{s_l})
=\sum_{n_1>\dots>n_l>0}\prod_{i=1}^l n_i^{-s_i}a^{s_i}
=\ze(s_1,\dots,s_l)a^{\sum_i s_i}
\]
for $s\in\bZ_{\ge1}^l$ with $s_1\ge2$.
Consider the $\bQ$-subalgebra $U=a\bQ[a]\sbs \bC[a]$
and the multiple zeta function $Z:\QS(aU,U)\to \bC[a]$.
Its image $\cZ(U)\sbs \bC[a]$ is a graded algebra equal to
$\bop_{k\ge0}\cZ_k a^k\iso \bop_{k\ge0}\cZ_k$,
where $\cZ_k\sbe\bC$ is the space spanned by weight $k$ multiple zeta values (corresponding to collections $(s_i)_i$ satisfying $\sum_i s_i=k$).
The classical algebra $\cZ\sbs \bC$ of \MZV is the image of the evaluation map $\cZ(U)\emb\bC[a]\to\bC$, $f\mto f(1)$.
It is conjectured in \cite{hoffman_algebra} that the map $\cZ(U)\to\cZ$ is an isomorphism.
\end{example}

\subsubsection{Weights}
Let $A$ be a complete \la-ring as before and let $B\sbs A$ be a subalgebra equipped with an increasing (weight) filtration $(W_iB)_{i\ge0}$ such that
$W_0B=0$ and $W_iB\circ W_jB\sbs W_{i+j}B$.

\begin{remark}
Given vector spaces $U,V$ with increasing filtrations
$(W_iU)_{i}$ and $(W_iV)_{i}$, we equip $U\ts V$ with the filtration
$W_n(U\ts V)=\sum_{i+j=n}W_iU\ts W_jV$.
If $U$ is equipped with a filtration $(W_iU)_i$ and $f:U\to V$ is a surjective linear map,
then $V$ can be equipped with the filtration $W_iV=f(W_iU)$.
A subspace $V\sbs U$ can be equipped with the filtration $W_iV=V\cap W_iU$.
%One defines $\gr^W_iU=W_iU/W_{i-1}U$ for $i\in\bZ$.
\end{remark}

The algebra $Q=\QS(B^+,B)$ inherits the weight filtration such that $W_0Q=K=B^{\ts0}$.
% (since $W_0B=0$).
Therefore the image $\cZ(B)$ of $Z:Q\to A$ also inherits the weight filtration.
Assuming that $\dim W_iB<\infty$ for all $i\ge0$, we conclude that the same is true for $Q$ and $\cZ(B)$ (recall that $W_0B=0$).
Define the Poincar\'e series of $\cZ(B)$
\begin{equation}
P(\cZ(B),t)=\sum_{i\ge0}\dim \gr^W_i \cZ(B) t^i,\qquad
\gr^W_i\cZ(B)=W_i\cZ(B)/W_{i-1}\cZ(B),
\end{equation}
with the convention $W_{-1}=0$.

\section{Multiple \tpdf{$q$}{q}-zeta values}
Let $K$ be a field of characteristic zero
and let $A=K\pser q$ be the \la-ring with Adams operations $\psi^n(f)=f(q^n)$.
It is a complete \la-ring \wrt the decreasing filtration $(F^iA)_{i\ge0}$ defined by $F^iA=q^iA$ for $i\ge0$.
Let $A^+=F^1A=qK\pser q$.
Given a subalgebra $B\sbs A$, let $B^+=B\cap A^+$.
By the previous results, the \mzf
\[Z:\QS(B^+,B)\to K\pser q,\qquad
f_1\ts\dots\ts f_k\mto
\sum_{n_1>\dots>n_k>0}\prod_{i=1}^k f_i(q^{n_i}),
\]
is an algebra homomorphism.
Let $\qZ(B)\sbs K\pser q$ denote its image.
First, we review the known results and conjectures concerning
these algebras
and then investigate a particular algebra of this type.

\subsection{Multiple \tpdf{$q$}{q}-zeta values}
\label{sec:qMZV}
Consider the algebras
\begin{equation}
\R=K\sbr{q,\tfrac 1{q-1}},\qquad
\R_0=K\sbr{\tfrac{q}{1-q}}
=\sets{f\in \R}{\deg f\le0},
\end{equation}
where $\deg(f/g)=\deg f-\deg g$ for $f,g\in K[q]\ms\set0$
and $\deg(0)=-\infty$.
These algebras have the following algebro-geometric interpretation.
For $f=\sum_{i\in\bZ}f_iq^i\in K\lser q$,
let
\[\ord(f)=\inf\sets{i\in\bZ}{f_i\ne0}.\]
In particular, $\ord(0)=\infty$.
For $f\in K(q)$ and $c\in K$, let
$\ord_c(f)=\ord f(q+c)$.
It is the zero order of $f$ at $q=c$ for $f\in K[q]$.
We also define $\ord_\infty(f)=\ord f(q\inv)=-\deg f$.
Then
\begin{gather*}
\cO(\bA^1_K\ms\set 1)=K\sbr{q,\tfrac1{q-1}}=\R,\\
\cO(\bP^1_K\ms\set1)
=\sets{f\in \cO(\bA^1_K\ms\set 1)}{\ord_\infty(f)\ge0}
=\sets{f\in \R}{\deg f\le0}=\R_0.
\end{gather*}
If $0\ne f\in\R_0$, then $\ord_c(f)\ge0$ for all $c\ne1$, while $\ord_1(f)\le0$.
More generally, define
\begin{equation}\label{Rd}
\R_d=\sets{f\in \R}{\deg f\le -d},\qquad d\ge0.
\end{equation}

Define the algebras of multiple $q$-zeta values
(\cf \cite{bachmann_dimension})
\begin{gather}
\qzp d=\qZ(\Rp_d)=\Im\rbr{Z:\QS(\Rp_d)\to K\pser q},\\
\qz d=\qZ(\R_d)=\Im\rbr{Z:\QS(\Rp_d,\R_d)\to K\pser q}.
\end{gather}

%\begin{equation}
%\qz d=\qz(\R_d)\sbs K\pser q,\qquad \qzp d=\qz(\Rp_d)\sbs K\pser q.
%\end{equation}

Consider an increasing weight filtration of $\R_0$ defined by
$W_0\R_0=0$ and
\begin{equation}\label{R-wt}
W_k\R_0
=\sets{f\in \R_0}{\ord_1(f)\ge-k}
=\sets{f\in\R_0}{(q-1)^kf\in K[q]}
,\qquad k\ge1.
\end{equation}
We have $\dim W_k\R_0=k+1$ for $k\ge1$.
The subalgebras $\R_d$ and $\Rp_d$
inherit the weight filtration.
% $W_k\R_d=\R_d\cap W_k\R_0$.
The algebras $\qzp d$ and $\qz d$ also inherit the weight filtration.

\begin{remark}\label{Z-compare}
It is proved in \cite{hirose_unified} that
$\qz 0=\qzp 0$ and $\qz 1=\qzp 1$.
These equalities were conjectured earlier in
\cite{bachmann_algebraa,bachmann_dimension}.
Although not stated explicitly, the proof in \cite{hirose_unified} also shows that these
equalities are compatible with the weight filtrations:
the reduction in \cite[Lemma~2.3]{hirose_unified}
and the recursion in \cite[Definition~3.6]{hirose_unified}
do not increase the weight.
\end{remark}

\begin{lemma}\label{basis}
Let $(f_i)_{i\ge0}$ be a sequence in $\R_0$ such that $g_i=(q-1)^if_i\in K[q]$ and $g_i(1)\ne0$.
Then $(f_i)_{i\ge0}$ forms a basis of $\R_0$.
\end{lemma}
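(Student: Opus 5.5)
We argue via the weight filtration of $\R_0$ from \eqref{R-wt}, namely $W_k\R_0=\sets{f\in\R_0}{(q-1)^kf\in K[q]}$ for $k\ge1$. We set $W_{-1}\R_0=0$ and $W'_0=K$ (the constants). The claim is that for every $k\ge0$ the elements $f_0,\dots,f_k$ form a basis of $V_k:=\sets{f\in\R_0}{(q-1)^kf\in K[q]}$. Since $\R_0=\bigcup_k V_k$, this immediately gives that $(f_i)_{i\ge0}$ is a basis of $\R_0$.

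First we describe $V_k$ explicitly. Every element of $\R$ can be written as $g/(q-1)^m$ with $g\in K[q]$. The condition $\deg f\le 0$ means that $f=g/(q-1)^k$ with $\deg g\le k$. Hence
\[V_k=\sets{g/(q-1)^k}{g\in K[q],\ \deg g\le k},\]
so $\dim V_k=k+1$, consistent with the stated $\dim W_k\R_0=k+1$. We also need $f_i\in V_i\sbs V_k$ for $i\le k$. This holds because $f_i\in\R_0$ by hypothesis and $(q-1)^if_i=g_i$ is a polynomial; the inclusion $V_i\sbs V_k$ comes from multiplying by $(q-1)^{k-i}$.

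Since $\dim V_k=k+1$ and there are $k+1$ elements $f_0,\dots,f_k$, it suffices to prove linear independence, and we do this by induction on $k$. The case $k=0$ holds because $f_0=g_0$ is a nonzero constant. Now suppose $\sum_{i\le k}c_if_i=0$. Multiplying by $(q-1)^k$ gives
\[\sum_{i\le k} c_i(q-1)^{k-i}g_i=0.\]
Evaluating at $q=1$ yields $c_kg_k(1)=0$, hence $c_k=0$ because $g_k(1)\ne0$. The remaining relation involves only $f_0,\dots,f_{k-1}$, so the inductive hypothesis finishes the argument.

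Equivalently, one can phrase this as $V_k/V_{k-1}\iso K$ via $f\mto((q-1)^kf)(1)$, and the image of $f_k$ under this map is $g_k(1)\ne0$. The only point requiring care, though it is still routine, is the description of $V_k$, namely that $\deg f\le0$ forces $\deg g\le k$.
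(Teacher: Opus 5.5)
Your proof is correct and follows essentially the same route as the paper. Like the paper, you identify the $k$-th filtration piece with $\{g/(q-1)^k : g\in K[q],\ \deg g\le k\}$, which has dimension $k+1$. You then get linear independence of $f_0,\dots,f_k$ by clearing denominators and evaluating at $q=1$ inductively, and pass to the union over $k$.
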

\begin{proof}
For $k\ge1$, the elements of the subspace $W_k=\sets{f\in\R_0}{\ord_1 (f)\ge-k}$
are of the form $f/(q-1)^k$, where $f\in K[q]$ has degree $\le k$.
Therefore $\dim W_k=k+1$.
The functions $f_0,\dots,f_k$ are contained in $W_k$ and are linearly independent.
Indeed, if $\sum_{i=0}^k a_if_i=0$,
then $0=(q-1)^k\sum_{i=0}^k a_if_i|_{q=1}=a_k g_k(1)$, hence $a_k=0$.
Similarly, $a_0=\dots=a_k=0$.
We conclude that $f_0,\dots,f_k$ form a basis of~$W_k$.
Therefore $(f_i)_{i\ge0}$ forms a basis of $\R_0$.
\end{proof}

In the theory of multiple $q$-zeta values,
it is common to fix a basis $\bff=(f_s)_{s\in S}$ of an algebra $B\sbs \R_0$ and to define
\begin{equation}\label{Z-model}
%Z^\bff(s_1\dots s_l)=
Z^\bff(s_1,\dots,s_l)=Z(f_{s_1}\ts\dots \ts f_{s_l})\in K\pser q
\end{equation}
for $s\in S^l$ with $f_{s_1}\in B^+$.
In this context, the basis $\bff$ is called a \idef{model}.
See \cite{schlesinger_some,zudilin_diophantine,
bradley_multiple,zhao_multiple,
okounkov_hilbert,bachmann_short,bachmann_algebra}
for various choices of such models.
Assume that $B^+$ has a basis $(f_s)_{s\in S_+}$ for a subset $S_+\sbs S$.
Then the algebra $\qZ(B)$ is spanned by the values $Z^\bff(s_1,\dots,s_l)$ for $s\in S^l$ with $s_1\in S_+$ (or $s=()\in S^0$).
Similarly, the algebra $\qZ(B^+)$ is spanned by the values $Z^\bff(s_1,\dots,s_l)$ for $s\in S_+^l$.

\begin{example}
Let
\begin{equation}
f_{d,k}=\frac{q^{k-d}}{(1-q)^k}\in\R_0,\qquad k\ge d\ge0.
\end{equation}
so that
\begin{equation}
f_{d+1,k}=f_{d,k}+f_{d,k-1},\qquad k\ge d+1.
\end{equation}
Then
\begin{enumerate}
\item $\R_d$ has the basis $f_{d,i}$ for $i\ge d$.
\item $\Rp_d$ has the basis $f_{d,i}$ for $i>d$.
\item $W_k\R_d$ has the basis $f_{d,i}$ for $d\le i\le k$,
where $k\ge1$.
\item $W_k\Rp_d$ has the basis $f_{d,i}$ for $d< i\le k$.
\end{enumerate}
The basis $\bff^\SZ=\rbr{z_k=f_{0,k}}_{k\ge0}$ of $\R_0$ is called the
Schlesinger-Zudilin model
\cite{schlesinger_some,zudilin_diophantine}
and the basis $\bff^\BZ=(u_k=f_{1,k})_{k\ge1}$ of $\R_1$ is called the Bradley-Zhao model
\cite{bradley_multiple,zhao_multiple}.
Note that
\begin{equation}
z_m\circ z_n=z_{m+n},\qquad
u_m\circ u_n=u_{m+n}+u_{m+n-1}.
\end{equation}
By Corollary \ref{cor:Zt}, we have (\cf \cite{singer_q})
\begin{equation}\label{Singer}
\Lt(z_{s_1}\ts\dots\ts z_{s_k})
=\sum_{n_1>\dots>n_k>0}t^{n_1}
\prod_{i=1}^k\rbr{\frac{q^{n_i}}{1-q^{n_i}}}^{s_i}
=X^{s_1}\cT\dots X^{s_k}\cT(1),
\end{equation}
where $X=\P_a$ for $a=z_1=\frac q{1-q}$ \eqref{PY}, meaning that $(Xf)(t)=\sum_{i\ge1}f(q^it)$ for $f\in tA\pser t$.
The operator $X=\P_a$ is actually a Rota-Baxter operator \eqref{RB eq},
since $a=\frac q {1-q}$ satisfies
\begin{equation}
\psi^m(a)\psi^n(a)=\psi^{m+n}(a)(\psi^m(a)+\psi^n(a)+1).
\end{equation}
\end{example}

\begin{example}
Consider the Eisenstein series (for even $k>0$)
\[E_k=1-\frac{2k}{B_k}\sum_{n,d\ge1}n^{k-1}q^{nd},\]
where $B_k\in\bQ$ are Bernoulli numbers.
Using the Euler operator $D=q\frac{d}{dq}$, we obtain
\[\sum_{n,d\ge1}n^{k-1}q^{nd}
=\sum_{d\ge1}\psi^d\rbr{D^{k-1}\frac q{1-q}}
=Z(Q_k),\qquad Q_k=D^{k-1}\frac q{1-q}.
\]
The polynomials $P_k(q)$ defined by
\[\frac{qP_k(q)}{(1-q)^{k+1}}=\sum_{n\ge0}(n+1)^kq^{n+1}
=D^k\frac q{1-q}=Q_{k+1}\]
are the Eulerian polynomials satisfying $P_k\in\bN[q]$ and $\deg P_k=k-1$  for $k\ge1$ (see \eg \cite{hirzebruch_eulerian}).
Therefore $P_k(1)\ne0$ and $\deg Q_{k+1}=-1$ for $k\ge1$.
Applying \cref{basis}, we conclude that $(Q_k)_{k\ge1}$ is a basis of~$\Rp_0$.
Up to some rational scalars, this basis was considered in \cite{bachmann_algebra}.
\end{example}

It is conjectured in \cite{bachmann_algebraa,bachmann_dimension}
that $\qzp 0=\qZ(W_3\Rp_0)$, where
$W_3\Rp_0=\angs{f_{0,i}}{1\le i\le 3}$.
Moreover,
\begin{equation}\label{BK_conj}
\sum_{i\ge0}\dim\gr^W_i(\qzp 0) t^i
=\frac1{1-t-t^2-t^3+t^6+t^7+t^8+t^9}.
\end{equation}
Similarly, it is conjectured in \cite{okounkov_hilbert}
that $\qzp {1}=\qZ(W_5 \Rp_1)$, where
$W_5\Rp_1=\angs{f_{1,i}}{2\le i\le 5}$.
Moreover
\begin{equation}
\sum_{i\ge0}\dim\gr^W_i(\qzp 1) t^i
=\frac1{1-t^2-t^3-t^4-t^5+t^8+t^9+t^{10}+t^{11}+t^{12}}.
\end{equation}
Generally, our computations suggest that
$\qzp d=\qZ(W_n\Rp_d)$ for some $n=n(d)$.
For example,
\begin{equation}
\qzp {2}=\qZ(W_9 \Rp_2),\qquad
\qzp {3}=\qZ(W_{13} \Rp_3),\qquad
\qzp {4}=\qZ(W_{17} \Rp_4).
\end{equation}

\subsection{Duality}
\label{sec:duality}
Recall that the algebra $\qz 0=\qZ(\R_0)\sbs A=K\pser q$ is the image of the algebra homomorphism $Z:\QS(\Rp_0,\R_0)\to A$.
In this section we will describe the conjectural kernel of this map.

The algebra $\R_0$ has the basis $(z_i)_{i\ge0}$ and the algebra $\Rp_0$ has the basis $(z_i)_{i\ge1}$,
where $z_i=\frac{q^i}{(1-q)^i}$ has weight $\max\set{i,1}$ for $i\ge0$ \eqref{R-wt}.
We identify $z_i$ with the elements $x^iz_0$
of the free algebra $\eH=K\ang{x,\y}$.
% and the elements $z_i=x^i\y\in\eH$ for $i\ge0$.
Then
\[\eH^1:=K\oplus \eH \y=K\ang{z_0,z_1,\dots}\iso\QS(\R_0),\]
\[\eH^0:=K\oplus x\eH \y
=K\oplus\bop_{i\ge1}z_i\eH^1\iso\QS(\R_0^+,\R_0).\]
Under this identification, we have
\[Z(z_{s_1}\dots z_{s_k})
=\sum_{n_1>\dots>n_k>0}\prod_i\rbr{\frac{q^{n_i}}{1-q^{n_i}}}^{s_i}.\]

Let us equip $\eH$ and $\eH^0$ with the anti-involution
\[\ta:\eH\to\eH,\qquad \ta(uv)=\ta(v)\ta(u),\qquad \ta(x)=\y,\qquad \ta(\y)=x.\]

\begin{theorem}
[See {\cite{takeyama_algebra,zhao_uniform,ebrahimifard_duality}}]
Under the identification $\eH^0\iso\QS(\Rp_0,\R_0)$,
we have
\[Z(\ta u)=Z(u),\qquad u\in\eH^0.\]
\end{theorem}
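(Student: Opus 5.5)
We will prove the duality $Z(\ta u)=Z(u)$ for words in $\eH^0$ using the operator formalism of Theorem \ref{Zt-PY} and Corollary \ref{cor:Zt}. Write $a=z_1=\frac q{1-q}$ and $X=\P_a$. By \eqref{Singer}, for a word $u=x^{s_1}\y x^{s_2}\y\cdots x^{s_k}\y$ we have $L_t(u)=X^{s_1}\cT\cdots X^{s_k}\cT(1)$. Thus $Z(u)$ is obtained by replacing $x\mapsto X$ and $\y\mapsto\cT$ in the word, applying the resulting operator to $1$, and evaluating at $t=1$.

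The key step is to express this evaluation as a symmetric bilinear pairing. Write everything in terms of sums over nested indices. The operator $\cT$ is summation over a strictly smaller index: $\cT f=\sum_{m>n}$ applied to the $t^n$-coefficients. The operator $X$ multiplies the coefficient of $t^n$ by $\frac{q^n}{1-q^n}=\sum_{d\ge1}q^{nd}$. So $Z(u)$ becomes a sum over chains
\[
n_1\ge\cdots,\qquad d\text{-variables},
\]
of a monomial $q^{\sum n_jd_j}$. Concretely, a word in $x,\y$ corresponds to a lattice path. Each letter $x$ introduces a new variable $d$ paired with the current $n$, and each $\y$ introduces a new $n$ strictly smaller than the previous one. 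I would rewrite
\[
Z(x^{s_1}\y\cdots x^{s_k}\y)=\sum q^{\sum_{i}n_i(d_{i,1}+\cdots+d_{i,s_i})},
\]
and then reorganize the exponent by partial summation ("Abel summation") into the form $\sum_{j} m_j e_j$. Here the $m_j$ are strictly decreasing partial sums indexed by the $x$-letters, and the $e_j$ are positive variables indexed by the $\y$-letters. This is the standard conjugation $n\leftrightarrow d$ exchanging the roles of the two families, as in the proofs of \cite{takeyama_algebra,zhao_uniform}.

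Concretely, I would introduce, for the word read as a sequence of letters, variables attached to each letter. These are positive integers $\alpha_\ell$ for the $\y$-letters and positive integers $\beta_\ell$ for the $x$-letters. The exponent is then $\sum_{(\ell,\ell')}\alpha_\ell\beta_{\ell'}$ over pairs where the $x$-letter $\ell'$ precedes the $\y$-letter $\ell$, up to relabeling $n_i=\alpha_{\ell_i}+\alpha_{\ell_{i+1}}+\cdots$. To see this, note that $n_i>n_{i+1}>\dots>0$ is equivalent to writing $n_i$ as a tail sum of positive integers. Under this bijection the exponent $\sum_i n_i\sum_{\text{$x$ before }\y_i\text{ in block } i}d$ becomes $\sum_{x\text{-letter}\prec\y\text{-letter}}\beta\alpha$. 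This expression is manifestly invariant under reversing the word and swapping $x\leftrightarrow\y$ together with $\alpha\leftrightarrow\beta$, which is exactly $\ta$. The condition $u\in\eH^0$, meaning $u$ starts with $x$ and ends with $\y$, guarantees convergence in the $q$-adic topology and that $\ta u\in\eH^0$.

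The main obstacle is making the change of variables $n_i\leftrightarrow\alpha$ bookkeeping precise when some $s_i=0$, i.e. when $z_0=1$ occurs, so that blocks contain no $x$. One must check that the pairing formula still holds, with empty $d$-sums contributing nothing. One must also check that all sums are over positive integers with no boundary discrepancies, using that the last letter is $\y$ (so $n_k>0$) and the first is $x$. I would verify the formula first on the operator level: $\cT$ inserts a positive summand into the tail, and $X$ inserts a factor $\sum_{\beta\ge1}q^{\beta\cdot(\text{current tail sum})}$. The identity then follows by induction on word length from Theorem \ref{Zt-PY}.
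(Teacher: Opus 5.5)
Your argument is correct, but it takes a genuinely different route from the paper's.

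\textbf{The paper's route.} The paper never expands $\cT$. It writes $X=\P_a$, $a=\frac q{1-q}$, as a sum of substitution operators, $Xf=\sum_{m\ge1}f(q^mt)$, and computes $\Lt(\ta u)=X\cT^{s_k}\cdots X\cT^{s_1}(1)$ directly. Each $X$ contributes a new summation index; the nested indices $n_i$ are tail sums of these. Each $\cT^{s_i}$, after the substitutions, becomes $\bigl(\frac{q^{n_i}t}{1-q^{n_i}t}\bigr)^{s_i}$. Hence
\[
\Lt(\ta u)=\sum_{n_1>\dots>n_k>0}\prod_i\Bigl(\frac{q^{n_i}t}{1-q^{n_i}t}\Bigr)^{s_i},
\]
which at $t=1$ is the defining series of $Z(u)$. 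So the duality is a one-sided computation showing that $X$ and $\cT$ trade roles under $\ta$. As a by-product it gives a $t$-deformation of $Z(u)$ that is $\Lt(\ta u)$, though not $\Lt(u)$.

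\textbf{Your route.} You expand both operators completely: $X$ through $\frac{q^n}{1-q^n}=\sum_{d\ge1}q^{nd}$, and $\cT$ through tail sums $n_i=\alpha_i+\dots+\alpha_k$. This gives a sum over independent positive integers, one per letter, of $q^{\sum\beta_{\ell'}\alpha_\ell}$, where the sum in the exponent runs over pairs with the $x$-letter $\ell'$ preceding the $\y$-letter $\ell$. That expression is visibly symmetric under $\ta$, because reversal plus the swap $x\leftrightarrow\y$ maps such pairs bijectively to such pairs. I checked this and it is right.

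\textbf{What each buys.} Your version is the classical generating-function argument; Theorem~\ref{Zt-PY} serves only as bookkeeping. In exchange you get:
\begin{itemize}
\item a manifestly symmetric closed formula;
\item a transparent convergence criterion (every $x$ has a later $\y$, every $\y$ an earlier $x$);
\item no need to re-parse $\ta u$ into $z$-blocks.
\end{itemize}
The paper's version is shorter and is meant to showcase the operator formalism.

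\textbf{Minor points.} The case $s_i=0$ that you flag as the main obstacle is harmless: an empty $x$-block simply contributes no $\beta$-variables, and the pairing formula holds verbatim. Also, your chain should read $n_1>\dots>n_k>0$ with strict inequalities, as you in fact use later.
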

\begin{proof}
Let $a=\frac q{1-q}$ and $X=\P_a$ \eqref{PY}.
We have $Xf=\sum_{k\ge1}\P_{q^k}f=\sum_{k\ge1}f(q^kt)$ for $f\in tA\pser t$.
Let $u=z_{s_1}\dots z_{s_k}=x^{s_1}\y\dots x^{s_k}\y$
so that $\ta u=x\y^{s_k}\dots x\y^{s_1}$.
By Theorem~\ref{Zt-PY}
\[\Lt(u)=X^{s_1}\cT\dots X^{s_k}\cT(1).\]
Therefore
\[\Lt(\ta u)=X\cT^{s_k}\dots X\cT^{s_1}(1)=\sum_{n_1>\dots>n_k>0}\prod_i
\rbr{\frac{q^{n_i}t}{1-q^{n_i}t}}^{s_i}.\]
This implies that $Z(\ta u)=\sum_{n_1>\dots>n_k>0}
\prod_i\rbr{\frac{q^{n_i}}{1-q^{n_i}}}^{s_i}=Z(u)$.
\end{proof}

Let $\eH^0_*$ denote the algebra $\eH^0$ equipped with the quasi-shuffle product $*$ inherited from $\QS(\Rp_0,\R_0)$.
Let $(\ta u-u\col u\in\eH^0)\sbs\eH^0_*$ be the ideal generated by the elements of the form $\ta u-u$.
Define the algebra of \idef{formal multiple $q$-zeta values}
\begin{equation}\label{formal Z}
\Zf=\eH^0_*/(\ta u-u\col u\in\eH^0).
\end{equation}

\begin{conjecture}[See {\cite{takeyama_algebra}}]
\label{conj2}
The canonical surjective map $Z:\Zf\to\qz0$ is an isomorphism.
\end{conjecture}

The algebras $\eH^0_*$ and $\Zf$ inherit the weight filtration from $\QS(\Rp_0,\R_0)$.
Explicitly,
\begin{equation}
\wt(z_{s_1}\dots z_{s_k})=\sum_i \max\set{s_i,1}.
\end{equation}
Note that $\wt(\ta u)=\wt(u)$ for every word $u$ in $\eH^0$.
Combining Conjecture \ref{conj2} and the conjectural formula \eqref{BK_conj}, we obtain a
conjecture about the Poincar\'e series of the algebra $\Zf$
\begin{equation}
P(\Zf,t)
=\sum_{i\ge0}\dim\gr^W_i(\Zf) t^i
=\frac1{1-t-t^2-t^3+t^6+t^7+t^8+t^9}
\end{equation}

\def\B{B}
\def\A{K\pser q}
\subsection{Algebra of regular qMZV}
In this section we will study multiple zeta values
associated with the algebra
$\B=\cO(\bP_K^1\ms\set\infty)=K[q]$
in place of the algebra $\R_0=\cO(\bP_K^1\ms\set1)$ considered earlier.
As before, we consider the algebra homomorphisms
\[
Z:\QS(\B^+)\to \A,\qquad
Z:\QS(\B^+,\B)\to \A,\]
and denote their images by $\Zrp$ and $\Zr$, respectively.
Similarly to \eqref{R-wt}, we equip $\B$ with the weight filtration
given by $W_0\B=0$ and
\[W_k\B=\sets{f\in \B}{\ord_\infty(f)\ge-k}=\sets{f\in \B}{\deg f\le k},\qquad k\ge1,\]
and consider the induced filtrations of $\Zrp$ and $\Zr$.

Consider the basis $\bff=(q^i)_{i\ge0}$ of $\B$ and,
following \eqref{Z-model}, let
\[
Z^\bff(s_1,\dots,s_k)=Z(q^{s_1}\ts\dots\ts q^{s_k}),\qquad
\Lt^\bff(s_1,\dots,s_k)=\Lt(q^{s_1}\ts\dots\ts q^{s_k}),\]
for $s\in\bN^k$ with $s_1\ge1$.
The weight of $Z^\bff(s_1,\dots,s_k)$ is $\le\sum_i \max\set{s_i,1}$.

\begin{lemma}\label{regZ formula}
We have
\[\Lt^\bff(s_1,\dots,s_k)
=\prod_{i=1}^k \frac{q^{s_1+\dots+s_i}t}{1-q^{s_1+\dots+s_i}t},\qquad
Z^\bff(s_1,\dots,s_k)=\prod_{i=1}^k \frac{q^{s_1+\dots+s_i}}{1-q^{s_1+\dots+s_i}}\]
\end{lemma}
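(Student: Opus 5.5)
We compute $\Lt^\bff(s_1,\dots,s_k)$ by induction on $k$, using the recursion $\Lt(au)=\P_a\cT\Lt(u)$ from Theorem \ref{Zt-PY}. The formula for $Z^\bff$ then follows by setting $t=1$, which is legitimate in the $q$-adic topology since $s_1\ge1$.

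We first record how the operators act in our setting. For $a=q^s$ we have $\psi^n(a)=q^{sn}$, so $\P_{q^s}$ sends $t^n\mapsto q^{sn}t^n$. In other words, $\P_{q^s}$ is the substitution $f(t)\mapsto f(q^st)$ on $tA\pser t$.

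Next we set up the induction; it is cleanest to induct from the right. Write $u=q^{s_2}\ts\dots\ts q^{s_k}$. Assume
\[\Lt(u)=\prod_{i=2}^k\frac{q^{s_2+\dots+s_i}t}{1-q^{s_2+\dots+s_i}t}.\]
The base case $k=0$ is the empty product $\Lt(\ep)=1$.

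Now apply $\cT$, which multiplies by $\frac t{1-t}$. This produces the product with an extra factor corresponding to the empty partial sum $q^0$:
\[\cT\Lt(u)=\frac t{1-t}\prod_{i=2}^k\frac{q^{s_2+\dots+s_i}t}{1-q^{s_2+\dots+s_i}t}.\]
Applying $\P_{q^{s_1}}$ substitutes $t\mapsto q^{s_1}t$. This turns $\frac t{1-t}$ into $\frac{q^{s_1}t}{1-q^{s_1}t}$ and each exponent $s_2+\dots+s_i$ into $s_1+\dots+s_i$. The result is exactly the claimed product for $(s_1,\dots,s_k)$.

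The one point needing care is the well-definedness of these steps, and even this is mild. The operator $\P_a$ is defined only on $tA\pser t$, and $\cT\Lt(u)$ lies there. The substitution description of $\P_{q^s}$ is valid on infinite products because $\P_{q^s}$ is a continuous ring endomorphism of $A\pser t$ (it is $t\mapsto q^st$). The case $s_i=0$ for $i\ge2$ is harmless.

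Finally we evaluate at $t=1$. Each factor $\frac{q^mt}{1-q^mt}$ with $m=s_1+\dots+s_i\ge1$ specializes to the element $\frac{q^m}{1-q^m}\in qK\pser q$. Since $Z(u)=\Lt(u)|_{t=1}$ and evaluation at $t=1$ is a ring map on the $t$-series that converge in the $q$-adic topology, this gives the formula for $Z^\bff$.
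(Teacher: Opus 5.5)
Your proof is correct and is essentially the paper's argument. The paper likewise starts from $\Lt^\bff(s_1,\dots,s_k)=\P_q^{s_1}\cT\cdots\P_q^{s_k}\cT(1)$ (via Corollary~\ref{cor:Zt}, with $\P_{q^s}=\P_q^s$ being the substitution $t\mapsto q^st$) and unwinds it exactly as your right-to-left induction does; your remarks on evaluating at $t=1$ (every partial sum is $\ge1$ because $s_1\ge1$) make explicit a step the paper leaves implicit.
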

\begin{proof}
By Corollary \ref{cor:Zt}, we have
$\Lt^\bff(s_1,\dots,s_k)=\P_q^{s_1}\cT\dots \P_q^{s_k}\cT(1)$.
We have $\P_q(f)=f(qt)$,
since $\P_q(t^n)=\psi^n(q)t^n=(qt)^n$.
Therefore $\P_q^k\cT f=\frac{q^kt}{1-q^kt}f(q^kt)$
and
\[\P_q^{s_1}\cT\dots \P_q^{s_k}\cT(1)
=\prod_{i=1}^k \frac{q^{s_1+\dots+s_i}t}{1-q^{s_1+\dots+s_i}t}.
\qedhere
\]
\end{proof}

\begin{corollary}
The algebras $\Zrp$ and $\Zr$ coincide,
and are generated by the elements
\[f_k=\frac{q^k}{1-q^k},\qquad k\ge1.\]
\end{corollary}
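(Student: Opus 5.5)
By Lemma \ref{regZ formula}, every spanning element of $\Zr$ is a product of the $f_k$. Indeed, for $s\in\bN^k$ with $s_1\ge1$, put $m_i=s_1+\dots+s_i$. Then $1\le m_1\le m_2\le\dots\le m_k$, and
\[Z^\bff(s_1,\dots,s_k)=\prod_{i=1}^k f_{m_i}.\]
Conversely, every finite product $f_{m_1}\cdots f_{m_k}$ with $1\le m_1\le\dots\le m_k$ (the $f$'s commute, so we may sort the indices) arises this way. We take $s_1=m_1$ and $s_i=m_i-m_{i-1}\ge0$; the condition $s_1\ge1$ holds because $m_1\ge1$. The empty word gives $1$. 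Hence $\Zr$, which is spanned by the values $Z^\bff(s)$ with $s_1\ge1$ together with $1$, is exactly the span of all monomials in the $f_k$, $k\ge1$. That span is the subalgebra $K[f_1,f_2,\dots]\sbs K\pser q$ generated by the $f_k$.

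For $\Zrp$ I would do the same with $s\in\bZ_{\ge1}^k$. This gives the monomials $\prod_i f_{m_i}$ with strictly increasing indices $m_1<\dots<m_k$, so $\Zrp$ is the span of the square-free monomials. It remains to see that this span is already all of $K[f_1,f_2,\dots]$. One route is to note that $\Zrp$ is an algebra, being the image of the algebra morphism $Z:\QS(\B^+)\to\A$, and it contains each $f_k=Z(q^k)$. Therefore it contains the subalgebra generated by the $f_k$, which equals $\Zr$. The reverse inclusion $\Zrp\sbs\Zr$ is clear, since $\QS(\B^+)\sbs\QS(\B^+,\B)$. So $\Zrp=\Zr=K[f_1,f_2,\dots]$.

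As a sanity check one can also see the closure concretely. The quasi-shuffle $q^a*q^a=2\,q^aq^a+q^{2a}$ gives $f_a^2=2f_af_{2a}+f_{2a}$, which expresses a square in terms of square-free products. This is consistent with the algebraic argument, but that argument makes the check unnecessary.

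There is no real obstacle here. The only points needing care are that the algebra-morphism property of $Z$ (the untruncated theorem, applied to $A=K\pser q$ with $B=\B$) justifies calling $\Zrp$ an algebra, and the bookkeeping of the bijection between $s$ and the sorted index sequences $m$, including the weak versus strict monotonicity in the two cases.
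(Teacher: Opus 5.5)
Your proposal is correct and matches the argument the paper leaves implicit. The corollary is read off from Lemma~\ref{regZ formula}: $\Zr$ is spanned by all monomials in the $f_k$, $\Zrp$ is spanned by the square-free ones, and the latter span already equals $K[f_1,f_2,\dots]$ because $\Zrp$ is the image of the algebra morphism $Z$ and contains every $f_k=Z(q^k)$ (your check $f_a^2=2f_af_{2a}+f_{2a}$ is also correct).
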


\begin{example}
The algebras $\Zrp$ and $\Zr$ have different weight filtrations,
in contrast to the algebras $\qzp0$ and $\qz0$ (see \cref{Z-compare}).
For example, the element $f_2^2=Z^\bff(2,0)$ has weight $\le2+1=3$,
hence $f_2^2\in W_3\Zr$.
On the other hand,
\[W_3\Zrp
=\spn\set{1,f_1,f_2,f_3,f_1f_2,f_1f_3,f_2f_3,f_1f_2f_3}
\]
and all elements of this spanning set have at most a simple pole at $q=-1$.
Therefore $f_2^2\notin W_3\Zrp$.
We will determine below the Poincar\'e series of $\Zrp$ and $\Zr$
for the two weight filtrations.
\end{example}

For $\bn=(n_1,\dots,n_k)$ with $1\le n_1\le\dots\le n_k$, define
\begin{equation}
f_\bn=\prod_{i=1}^k f_{n_i},\qquad
w(\bn)=n_k+\#\setc{2\le i\le k}{n_i=n_{i-1}}.
\end{equation}
For the empty sequence $\bn=()$, let $f_\bn=1$ and $w(\bn)=0$.
Then $Z^\bff(s_1,\dots,s_k)=f_\bn$,
where $n_i=s_1+\dots+s_i$.
Moreover, its weight is at most
\begin{equation}
\sum_{i=1}^k \max\set{s_i,1}
=(s_1+\dots+s_k)+\#\sets{2\le i\le k}{s_i=0}=w(\bn).
\end{equation}
The weight filtrations of $\Zrp$ and $\Zr$ are given
\begin{gather}
W_n\Zrp=\spn\sets{f_\bn}{1\le n_1<\dots<n_k,\, w(\bn)\le n,\, k\ge0},
\qquad\\
W_n\Zr=\spn\sets{f_\bn}{1\le n_1\le \dots\le n_k,\, w(\bn)\le n,\, k\ge0}.
\end{gather}

\begin{theorem}
We have
\begin{enumerate}
\item
The Poincar\'e series of $\Zrp$ is
\[
P(\Zrp,t)
:=\sum_{i\ge0}\dim\gr^W_i(\Zrp)t^i=1+\sum_{i\ge1}it^i=\frac{1-t+t^2}{(1-t)^2}.\]
\item
$\Zrp$ is spanned by the elements $Z^\bff(s_1,\dots,s_k)$
with $s_i\in\set{1,2}$.
\end{enumerate}
\end{theorem}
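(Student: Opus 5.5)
The plan is to turn both claims into a statement about numerator polynomials over a common denominator.

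\emph{Step 1 (part 1).} By the description of the weight filtration just before the theorem, $W_n\Zrp$ is spanned by the $f_\bn$ with $1\le n_1<\dots<n_k$ and $w(\bn)=n_k\le n$. These are exactly the products $f_S=\prod_{j\in S}f_j$ over subsets $S\sbs[n]=\set{1,\dots,n}$, with $f_\emptyset=1$. Put $D_n=\prod_{k=1}^n(1-q^k)$. Then
\[
f_S=\frac{N_S}{D_n},\qquad N_S=\prod_{k\in S}q^k\prod_{k\in[n]\ms S}(1-q^k).
\]
Hence $\dim W_n\Zrp=\dim\spn\sets{N_S}{S\sbs[n]}$.

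The span of the $N_S$ is the span of all products $\prod_{k=1}^n\ell_k$ with $\ell_k\in\spn\set{q^k,1-q^k}=\spn\set{1,q^k}$. Expanding, this is the span of the monomials $q^{\Si_T}$ for $T\sbs[n]$, where $\Si_T=\sum_{j\in T}j$. The subset sums of $[n]$ are exactly $\set{0,1,\dots,n(n+1)/2}$. Therefore
\[
\dim W_n\Zrp=1+\frac{n(n+1)}2 .
\]
Taking successive differences gives $\dim\gr^W_n\Zrp=n$ for $n\ge1$, and the $W_0$ term contributes $1$. This is the claimed Poincaré series.

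\emph{Step 2 (part 2, reduction).} By Lemma \ref{regZ formula}, $Z^\bff(s_1,\dots,s_k)=f_\bn$ with $n_i=s_1+\dots+s_i$. So the elements with $s_i\in\set{1,2}$ are the $f_S$ for sets $S=\set{n_1<\dots<n_k}$ satisfying two conditions: $n_1\in\set{1,2}$, and consecutive gaps lie in $\set{1,2}$.

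We show by induction on $n$ that such $f_S$ with $S\sbs[n]$ span $W_n\Zrp$, which suffices. Work with the numerators $N_S$ inside the polynomial space $V_n=\spn\set{1,q,\dots,q^{n(n+1)/2}}$, which equals the span of all $N_S$ by Step 1. The key observation is that the lowest-order term of $N_S$ is $q^{\Si_S}$ with coefficient $1$. Since the degrees of the $N_S$ are bounded, a triangularity argument with respect to the lowest order $\ord$ then applies. Any family of numerators whose lowest orders exhaust $\set{0,\dots,n(n+1)/2}$ spans $V_n$.

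\emph{Step 3 (the covering step).} By induction, the admissible sets $S\sbs[n-1]$ give numerators over $D_{n-1}$ spanning $V_{n-1}$. Multiplying by $1-q^n$ does not change the lowest order. So these $N_S$ already realize every lowest order $0,\dots,n(n-1)/2$.

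For the remaining orders $m=n(n+1)/2-r$ with $0\le r\le n-1$, take $S=[n]$ for $r=0$ and $S=[n]\ms\set r$ for $1\le r\le n-1$. Each such set contains $n$, has all gaps $\le2$, and has minimum $\le2$. Its element sum is exactly $m$. Triangularity in $\ord$ then gives that the admissible $N_S$ span $V_n$, hence the admissible $f_S$ span $W_n\Zrp$. The base case $n=1$ is immediate: $\set{1,f_1}$.

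The main obstacle is the clean identification in Step 1 of the span of the $N_S$ with all polynomials of degree $\le n(n+1)/2$. Once that is in place, both the dimension count and the triangular spanning argument follow formally, and the only remaining check is the elementary subset-sum covering in Step 3.
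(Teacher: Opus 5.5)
Your argument is correct in substance, and part~1 is the paper's proof in unrolled form. The paper shows $(q)_nW_n=(q)_{n-1}W_{n-1}+q^n(q)_{n-1}W_{n-1}$ and inducts. That is exactly your observation that the span of the $N_S$ is the product of the spaces spanned by $1,q^k$ for $k=1,\dots,n$, i.e.\ all polynomials of degree $\le\binom{n+1}2$.

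For part~2 your route differs. The paper passes to $e_k=1/(1-q^k)$ via the involution $f(q)\mapsto f(q^{-1})$, which sends $e_k\mapsto -f_k$ and preserves $W_n$. It then checks that the admissible $e$-numerators over $(q)_n$ contain $1$ and $1-q^k$ (from $[n]$ and $[n]\setminus\{k\}$), hence all polynomials of degree $<n$. It concludes by Euclidean division by $1-q^n$ plus induction.

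You instead stay with the $f_k$ and use that $N_S$ has lowest term exactly $q^{\Sigma_S}$. This turns spanning into the combinatorial statement that the element sums of admissible $S\subset[n]$ exhaust $\{0,\dots,\binom{n+1}2\}$. The two arguments are mirror images. Applying $p(q)\mapsto q^{\binom{n+1}2}p(q^{-1})$ to the $e$-numerator $\prod_{k\notin S}(1-q^k)$ gives $\pm N_S$, so the paper's degree bookkeeping becomes your order bookkeeping, and both rely on the same sets $[n]\setminus\{r\}$.

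What your version buys is that no involution or division step is needed, and it yields more. You get an explicit basis of $W_n$, one admissible $S$ for each value of $\Sigma_S$. You also get a general criterion: any family of subsets of $[n]$ whose element sums cover $\{0,\dots,\binom{n+1}2\}$ spans $W_n$.

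One step needs rephrasing. In Step~3 your inductive hypothesis is only that the admissible numerators span $V_{n-1}$, and from this you infer that their lowest orders realize every value $0,\dots,\binom n2$. That inference is invalid in general: $\{1,1+q\}$ spans the polynomials of degree $\le 1$, yet both elements have order $0$. What your covering step actually proves at each level is the stronger statement that the sums $\Sigma_S$ of admissible $S\subset[n]$ exhaust $\{0,\dots,\binom{n+1}2\}$. Make that purely combinatorial fact the induction statement, since your step proves it verbatim. Then deduce spanning by the lowest-order triangularity only at the end.
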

\begin{proof}
\clm1
Let $f_k=\frac{q^k}{1-q^k}$ and $e_k=1+f_k=\frac1{1-q^k}$ for $k\ge1$.
The vector space $W_n=W_n\Zrp$ is spanned by the products of the form $\prod_{i=1}^k e_{n_i}$ for $0<n_1<\dots<n_k\le n$.
% (including the empty product, equal to $1$).
%Therefore $W_{n}$ is spanned by the products $\prod_{i=1}^l e_{n_i}$, with $n_i$ as above.
Therefore
$W_{n}=W_{n-1}+e_{n}W_{n-1}$.
Let
\[\bW_n=(q)_n\cdot W_n\sbs K[q],\qquad (q)_n=\prod*_{i=1}^n(1-q^i).\]
Then $\bW_n=(1-q^n)\bW_{n-1}+\bW_{n-1}=q^n\bW_{n-1}+\bW_{n-1}$.
Therefore
\[\bW_n=\sets{f\in K[q]}{\deg f\le \tbinom{n+1}2}\]
by induction on $n$.
This implies that $\dim W_n-\dim W_{n-1}=n$.

\clm2
Let $U_n$ be spanned by the elements $\prod_{i=1}^k e_{n_i}$
and let $V_n$ be spanned by the elements $\prod_{i=1}^k f_{n_i}$,
where
\[0=n_0<n_1<\dots<n_k\le n,\qquad n_i-n_{i-1}\in\set{1,2}.\]
Let $U_0=V_0=K$.
We have $U_n,V_n\sbs W_n$ and we need to prove that $V_n=W_n$.
Let $\ta(f)=f(q\inv)$.
Then $\ta(e_k)=-f_k$, hence $\ta(U_n)=V_n$ and $\ta(W_n)=W_n$.
Therefore it suffices to show that $U_n=W_n$.
%(then $\dim U_n=\dim V_n-\dim W_n$, hence $U_n=W_n$).

The space $\bU_n=(q)_n\cdot U_n$
contains all polynomials of degree $<n$.
Indeed, considering the sequence $[n]=(1<\dots<n)$,
we obtain $1\in\bU_n$.
Considering the sequence $[n]\ms \set k$ for $1\le k<n$,
we obtain $\frac{1-q^k}{(q)_n}\in U_n$, hence $1-q^k\in\bU_n$.
Therefore $q^k\in\bU_n$.

Every $f\in \bW_n$ can be written in the form
$f=(1-q^n)g+h$, where $g,h\in K[q]$ satisfy $\deg g\le\deg f-n\le\binom n2$ and $\deg h<n$.
Therefore $g\in \bW_{n-1}$ and $h\in\bU_n$,
and we conclude that $\bW_n=(1-q^n)\bW_{n-1}+\bU_n$.
Therefore $W_n=W_{n-1}+U_n$.
By induction, $W_{n-1}=U_{n-1}\sbs U_n$,
hence $W_n=U_n$.
\end{proof}

Now we determine the Poincar\'e series of $\Zr$.
In what follows, let $\Phi_d(q)\in\bZ[q]$ be the $d$-th cyclotomic polynomial satisfying $q^n-1=\prod_{d\mid n}\Phi_d(q)$,
and let $\phi(d)=\deg\Phi_d$ be Euler's totient function.

\def\Np{\bN_1}
\def\fl#1{\lfloor #1\rfloor}
\begin{theorem}\label{thm:preg}
We have
\begin{gather*}
\dim \gr_n^W\Zr=\sum_{d=1}^n\phi(d),\qquad n\ge1,\\
%where $\phi$ is Euler's totient function.
%Therefore
P(\Zr,t)
=\sum_{i\ge0}\dim\gr_i^W(\Zr)t^i
=1+\frac{1}{1-t}\sum_{d\ge1}\phi(d)t^d.
\end{gather*}
\end{theorem}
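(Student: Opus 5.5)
The plan is to describe $W_n\Zr$ explicitly as a space of rational functions with bounded poles at roots of unity, and then read off its dimension. Write $e_k=1+f_k=\frac1{1-q^k}$. Since $W_n\Zr$ contains $1$, it is also spanned by the products $e_\bn=\prod_i e_{n_i}$ with $1\le n_1\le\dots\le n_k$ and $w(\bn)\le n$: expanding $\prod_i(1+f_{n_i})$ gives $f_{\bn'}$ for subsequences $\bn'$, which have $w(\bn')\le w(\bn)$. Put
\[D_n=\prod_{d=1}^n\Phi_d(q)^{\,n-d+1},\qquad
T_n=\sets{g/D_n}{g\in K[q],\ \deg g\le\deg D_n}.\]
I claim that $W_n\Zr=T_n$. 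Granting this, $\dim W_n\Zr=1+\sum_{d\le n}\phi(d)(n-d+1)$, so $\dim\gr^W_n\Zr=\sum_{d=1}^n\phi(d)$. Summing over $n$ then gives the stated form $1+\frac1{1-t}\sum_d\phi(d)t^d$ of $P(\Zr,t)$.

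\emph{Inclusion $W_n\Zr\sbs T_n$.} Each $e_\bn$ is regular at $\infty$, so it has degree $\le0$. Its only poles are at roots of unity, and the pole order at a primitive $d$-th root of unity is $r=\#\sets{i}{d\mid n_i}$. Suppose $r\ge1$, and let $s\ge1$ be the number of distinct values among the $n_i$ divisible by $d$. These values are distinct positive multiples of $d$, so $n_k\ge ds$. At least $r-s$ of them are repetitions, so
\[w(\bn)\ge ds+r-s\ge d+r-1.\]
Hence $r\le n-d+1$, and in particular $d\le n$. Therefore $D_ne_\bn$ is a polynomial, and since $\deg e_\bn\le0$ its degree is at most $\deg D_n$, which gives $e_\bn\in T_n$.

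\emph{Inclusion $T_n\sbs W_n\Zr$.} I will prove this by induction on $n$, using $T_{n-1}=W_{n-1}\Zr\sbs W_n\Zr$. By partial fractions, $T_n/T_{n-1}$ is detected by the top-order principal parts: the coefficient of $\Phi_d^{-(n-d+1)}$ taken in $K[q]/\Phi_d$ for each $d\le n$, plus one extra direction coming from the constant at $\infty$. The main step is to produce enough elements of weight $\le n$ realizing all of these top-order principal parts.

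\emph{Producing the top-order parts (main obstacle).} For $d\le n$, I will take $e_d^{\,n-d}\cdot h$, where $h=e_{\bm}$ with $\bm$ strictly increasing and $m_k=d$. This element has weight $d+(n-d)=n$ and pole order exactly $n-d+1$ at $\Phi_d$. By part (1) of the preceding theorem, such $h$ together with $W_{d-1}\Zrp$ span $(q)_d^{-1}\cdot\sets{g}{\deg g\le\binom{d+1}2}$. The leading coefficient at $\Phi_d$ is then essentially $g$ modulo $\Phi_d$, times a fixed unit. Since $\binom{d+1}2\ge\phi(d)-1$, these residues exhaust $K[q]/\Phi_d$. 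The delicate point is to check that the lower-order terms and the contributions at other $\Phi_{d'}$ fall into $T_{n-1}$, possibly after subtracting suitable elements of $W_n$. If a direct count fails, the fallback is to compare dimensions: the elements $e_\bn$ with $w(\bn)=n$ should give at least $\sum_{d\le n}\phi(d)$ independent classes modulo $T_{n-1}$, and the first inclusion then forces equality.
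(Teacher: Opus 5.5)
Your inclusion $W_n\Zr\subseteq T_n$ is correct and is exactly the paper's pole-order count (your $T_n$ is the paper's $E_n$). The gap is in the reverse inclusion, which is the substantive half: you leave the decisive step open, and as you formulate it, it is false.

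The quotient $T_n/T_{n-1}$ is detected exactly by
\[\Lambda(x)=\bigl(\pi_d(\Phi_d^{\,n-d+1}x)\bigr)_{d\le n}\in\bigoplus_{d\le n}K[q]/(\Phi_d).\]
On $T_n$ this map has kernel $T_{n-1}$, so the quotient has dimension $\sum_{d\le n}\phi(d)$. There is no extra direction at $\infty$: constants already lie in $T_0=K$, and an extra direction would contradict the formula you want.

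More seriously, your elements $x=e_d^{\,n-d+1}u$ with $u\in W_{d-1}\Zrp$ do \emph{not} have their contributions at the other $\Phi_{d'}$ in $T_{n-1}$. For example, for $d\ge2$ the element $e_d^{\,n-d}e_1e_2\cdots e_d$ has weight $n$ and a pole of order exactly $n$ at $q=1$. That is the top order allowed for $\Phi_1$ in $T_n$. Your fallback (``should give at least $\sum_{d\le n}\phi(d)$ independent classes'') only restates what has to be shown.

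The missing idea is triangularity in $d$. Since $u$ has denominator dividing $(q)_{d-1}$ and $e_d=1/(1-q^d)$, all poles of $x$ lie at $\Phi_{d'}$ with $d'\le d$, and $u$ is regular at $\Phi_d$. Write $e_d=v_d/\Phi_d$ with $v_d$ a unit at $\Phi_d$. Then the $d$-component of $\Lambda(x)$ is $\pi_d(v_d)^{n-d+1}\pi_d(u)$. This runs through all of $K[q]/(\Phi_d)$, because $\pi_d(W_{d-1}\Zrp)=K[q]/(\Phi_d)$. All other nonzero components of $\Lambda(x)$ have index $d'<d$. Treating $d=1,\dots,n$ in increasing order gives $\Lambda(W_n\Zr)=\bigoplus_{d\le n}K[q]/(\Phi_d)$. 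Hence $T_n=W_n\Zr+T_{n-1}=W_n\Zr$ by induction, and your argument closes.

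The paper uses the same building blocks ($f_d^r g$ with $g\in W_{d-1}\Zrp$) but avoids the induction on $n$. It shows directly that $1$ and $f_d^rg_{d,i}$ for all $r\le n-d+1$ are linearly independent. In a relation it takes the largest $d$, then the largest $r$, multiplies by $\Phi_d^r$ and reduces modulo $\Phi_d$. The terms with smaller index are regular at $\Phi_d$, which is the same triangularity.
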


\begin{proof}
Let $W_n=W_n\Zr$ and $W_n^+=W_n\Zrp$.
We can assume that $K=\bQ$, since $W_n(K)=W_n(\bQ)\ts_\bQ K$.
We will show that
\[\dim W_n=1+\sum_{d=1}^n(n-d+1)\phi(d).\]
First, we claim that
\[W_n\sbs E_n=\sets{\frac{p(q)}{\De_n(q)}}{\deg p\le\deg\De_n},\qquad
\De_n=\prod_{d=1}^n\Phi_d^{n-d+1}.
\]
The vector space $W_n$ is spanned by $1$ and
$f_\bn=\prod_{i=1}^k f_{n_i}$ for $1\le n_1\le\dots\le n_k$
with $k\ge1$ and
%$w(\bn)\le n$.
\[w(\bn)=n_k+\#\setc{2\le i\le k}{n_i=n_{i-1}}\le n.\]
The exponent of $\Phi_d$ in the denominator of $f_\bn$ is
$m_d=\#\setc{i}{d\mid n_i}$.
It is zero if $d>n_k$.
Among the distinct values of $n_i$, at most $\fl{n_k/d}$ are divisible
by $d$, and repetitions contribute at most
$r=w(\bn)-n_k$.
Therefore, for $d\le n_k$,
\[m_d\le\fl{n_k/d}+r\le (n_k-d+1)+r=w(\bn)-d+1\le n-d+1.\]
Since $\deg f_\bn\le0$,
we conclude that $f_\bn=p/\De_n$ for some $p\in \bQ[q]$ with $\deg p\le\deg \De_n$.
Hence
\[\dim W_n\le \dim E_n=1+\sum_{d=1}^n(n-d+1)\phi(d).\]

Let us prove the converse inequality.
For $k\ge0$, $1\le n_1<\dots<n_k<d$ and $r\ge1$,
the element $f_d^r f_\bn$ has weight $\le d+r-1$.
Therefore $f_d^r W^+_{d-1}\sbs W_n$ for $d+r-1\le n$.
Hence
\[K+\sum_{d=1}^n \sum_{r=1}^{n-d+1}f_d^r W^+_{d-1}\sbs W_n.\]
Let us estimate the dimension of this space.
Recall that $W^+_{d-1}=\sets{\frac p{(q)_{d-1}}}{\deg p\le\binom d2}$.
Consider the localization $C_d=\bQ[q]_{(\Phi_d)}$
and let $\pi_d:C_d\to \bQ[q]/(\Phi_d)$
be the projection to the residue field.
Since $(q)_{d-1}$ is coprime with $\Phi_d$ and
$1+\binom d2\ge d\ge\deg\Phi_d$ for $d\ge2$,
the map $\pi_d:W_{d-1}^+\to \bQ[q]/(\Phi_d)$ is surjective.
For $d=1$, the map
$\pi_d:W^+_0=\bQ\to \bQ[q]/(\Phi_1)$ is also surjective.

Choose elements $g_{d,i}\in W^+_{d-1}$ for $i\in[\phi(d)]$, whose images form a basis of $\bQ[q]/(\Phi_d)$.
We claim that the elements
\[1,\,f_d^r g_{d,i},\qquad d\in[n],\,i\in[\phi(d)],\,
r\in [n-d+1],
\]
are linearly independent.
Assume there is a linear dependence $c+\sum c_{d,r,i}f_d^r g_{d,i}=0$
and choose the largest $d$ with $c_{d,r,i}\ne0$.
Note that $f_d=u_d/\Phi_d$, where $u_d$ is invertible in $\bQ[q]_{(\Phi_d)}$.
The terms indexed by $e<d$ are contained in $\bQ[q]_{(\Phi_d)}$,
since $\Phi_d$ and $1-q^m$ are coprime for $1\le m<d$.
Choose the largest $r$ with $c_{d,r,i}\ne0$.
Multiplying by $\Phi_d^r$ and reducing modulo $\Phi_d$,
we obtain
$\pi_d(u_d)^r\sum_i c_{d,r,i}\pi_d(g_{d,i})=0$,
hence $c_{d,r,i}=0$ for all $i$.
This is a contradiction.
We conclude that
\[\dim W_n\ge 1+\sum_{d=1}^n (n-d+1)\phi(d),\]
hence we have an equality.
This implies that
$\dim W_n/W_{n-1}=\sum_{d=1}^n\phi(d)$.
\end{proof}

%Literature
\bibliographystyle{halpha}
\bibliography{qzeta}
\end{document}